\def\ker{\mathop{\rm Ker}\nolimits}
\newcommand{\CC}{\mathbb C}
\newtheorem{theorem}{Theorem}[section]
\newtheorem{example}[theorem]{Example}
\newtheorem{lemma}[theorem]{Lemma}
\DeclareMathAlphabet{\mathup}{OT1}{\familydefault}{m}{n}
\newcommand*{\mup}[1]{\mathup{#1}} 
\newcommand*{\iu}{\mathup{i}\mkern1mu}
\newcommand\ten[1]{\mathbf{#1}}
\newcommand\dirvec[1]{\mathbf{e}_{#1}}
\DeclareMathOperator{\e}{e}
\renewcommand*\env@matrix[1][c]{\hskip -\arraycolsep
  \let\@ifnextchar\new@ifnextchar
  \array{*\c@MaxMatrixCols #1}}
\begin{document}

\title{A Sylvester equation approach for the computation of zero-group-velocity points in waveguides}
\author*[1,2]{Bor Plestenjak}\email{bor.plestenjak@fmf.uni-lj.si}
\author[3]{Daniel A. Kiefer}\email{daniel.kiefer@espci.fr}
\author[4]{Hauke Gravenkamp}\email{gravenkamp.research@gmail.com}

\affil*[2]{\orgdiv{Faculty of Mathematics and Physics}, \orgname{University of Ljubljana}, \orgaddress{\street{Jadranska 19}, \city{1000 Ljubljana}, \country{Slovenia}}}
\affil*[3]{\orgname{Institute of Mathematics, Physics and Mechanics}, \orgaddress{\street{Jadranska 19}, \city{1000 Ljubljana}, 
\country{Slovenia}}}

\affil[3]{\orgdiv{Institut Langevin, ESPCI Paris}, \orgname{Universit\'e PSL, CNRS}, \orgaddress{\city{75005 Paris}, \country{France}}}
\affil[4]{\orgname{International Centre for Numerical Methods in Engineering (CIMNE)}, \orgaddress{\city{08034 Barcelona}, \country{Spain}}}

\hyphenation{wave-gui-de wave-guides}

\graphicspath{{img/},{tikz/}}

\abstract{ 
    Eigenvalues of parameter-dependent quadratic eigenvalue problems form eigencurves. The critical points on these curves, where the derivative vanishes, are of practical interest. 
    A particular example is found in the dispersion curves of elastic waveguides, where such points are called zero-group-velocity (ZGV) points. Recently, it was revealed that the problem of computing ZGV points can be modeled as a multiparameter eigenvalue problem (MEP), and several numerical methods were devised. Due to their complexity, these methods are feasible only for problems involving small matrices. In this paper, we improve the efficiency of these methods by exploiting the link to the Sylvester equation. This approach enables the computation of ZGV points for problems with much larger matrices, such as multi-layered plates and three-dimensional structures of complex cross-sections.}

\keywords{parameter-dependent quadratic eigenvalue problem, eigencurve, zero-group-velocity point, Sylvester equation, method of fixed relative distance, two-parameter eigenvalue problem}    

\pacs[Funding]{
Bor Plestenjak has been supported by the Slovenian Research and Innovation Agency (grants N1-0154 and P1-0194). Daniel A.\ Kiefer has received support under the program ``Investissements d'Avenir'' launched by the French Government under Reference No.\ ANR-10-LABX-24.}

\maketitle

\section{Introduction}

In many physics and engineering applications, we encounter parameter-dependent quadratic eigenvalue problems (QEP) of the form
\begin{equation}\label{eq:ZGV1}
    W(k,\omega)u:=\big((\iu k)^2 L_2+ \iu k L_1+L_0+\omega^2 M\big)\,u=0,
\end{equation}
where $L_0$, $L_1$, $L_2$, $M$ are real $n\times n$ matrices, which are usually obtained by a (semi-)discretization
of a boundary value problem. The solutions $(k,\omega)$ form \emph{eigencurves} $\omega(k)$, and we are interested in locating the critical points on these curves, where $\omega'(k)= \frac{\partial \omega}{\partial k} = 0$. Although solutions of \eqref{eq:ZGV1} can be complex, we consider the important case where $\omega$ and $k$ are both real.

This work is motivated by the study of (anisotropic) elastic waveguides (see, e.g., \cite{ZGV_JASA_23, prada_local_2008}), where $\omega$ denotes the angular frequency and $k$ the wavenumber. In this context, the eigencurves are referred to as \emph{dispersion curves}. The slope $c_\mup{g} = \omega'$ is called \emph{group velocity}, which is of practical relevance, as it describes the propagation of energy. Points $(k_*, \omega_*)$ on the dispersion curves where the group velocity vanishes are called \emph{zero-group-velocity (ZGV)} points. Often, the term is used exclusively for solutions at finite wavenumber~$k_*$, as this is the non-trivial case, but here we use the designation for solutions at any $k_*$.
 In the light of this motivating practical application, we will generally refer to points on the curves formed by eigenvalues of parameter-dependent eigenvalue problems that satisfy $\omega'(k)=0$ as \emph{ZGV points}, irrespective of their physical interpretation.

Recently, a numerical algorithm for the computation of ZGV points in anisotropic elastic waveguides was introduced \cite{ZGV_JASA_23} that can be applied to a general problem of the form \eqref{eq:ZGV1}. The method is based on a
generalization of the method of fixed relative distance (MFRD) from \cite{Elias_DoubleEig}, which provides good initial approximations that can be refined by a locally convergent Newton-type method. Inspired by the Sylvester-Arnoldi method from \cite{MP_SylvArnoldi}, we show in this contribution that sophisticated tools from linear algebra substantially speed up the algorithm and reduce its memory requirements. This enables us to solve problems with larger matrices and tackle
more complex problems such as multi-layered plates as well as waveguides of arbitrary two-dimensional cross-sections.

In the following, we first discuss properties of ZGV points in Section~\ref{sec:theory}. In Section~\ref{sec:aux}, we introduce several tools we will use in the following section; the presentation is intertwined with their application to the computation of ZGV points: the Sylvester equation, multiparameter eigenvalue problems, and the MFRD. Our main contributions are included in Section~\ref{sec:impr}, where we show how we can exploit the structure of the Sylvester equation to apply the MFRD more efficiently, and in Section~\ref{sec:alg}, where we present a scanning algorithm for the computation of ZGV points that combines the MFRD and a locally convergent Gauss-Newton method. In Section~\ref{sec:waveguide}, we introduce a waveguide model that is used in the numerical experiments in the following section, where we demonstrate the strength of the proposed method. Finally, we discuss possible generalizations and give a conclusion in Sections~\ref{sec:generalization} and \ref{sec:conclusion}.

\section{Theory on ZGV points}\label{sec:theory}

If we assume that $u=u(k)$ and $\omega=\omega(k)$ are differentiable, then, by differentiating \eqref{eq:ZGV1}, 
we obtain
\[
    (-2k L_2 +\iu L_1 + 2\omega(k)\omega'(k)M)u(k) + W(k,\omega(k))u'(k)=0,
\]
where, in general, $\bullet' = \frac{\partial \bullet}{\partial k}$. If $(k_*, \omega_*)$ is a ZGV point, then $\omega'(k_*)=0$, and it follows that
\begin{equation}\label{eq:dif_ZGV}
    (-2k_* L_2 +\iu L_1)u_* + W(k_*,\omega_*)v_*=0,
\end{equation}
where $u_*=u(k_*)$ and $v_*=u'(k_*)$.
Let $z_*$ be the corresponding left eigenvector of \eqref{eq:ZGV1} at $(k_*,\omega_*)$, i.e., $z_*^\mup{H} W(k_*,\omega_*)=0$. By multiplying \eqref{eq:dif_ZGV} by $z_*^\mup{H}$ from the left, we get the following necessary condition for a ZGV point:
\begin{equation}\label{eq:cond_ZGV}
    z_*^\mup{H}(-2k_* L_2 +\iu L_1 )u_*= z_*^\mup{H} W'(k_*,\omega_*)u_*=0.
\end{equation}

\begin{lemma}\label{lem:ZGV_multiple_eig}
    If $(k_*,\omega_*)$ is a ZGV point of the parameter-dependent QEP \eqref{eq:ZGV1}, then $k_*$ is a multiple eigenvalue of the
    QEP
    \begin{equation}\label{eq:QEP_Q}
        Q(k)u:=\big((\iu k)^2 L_2+ \iu k L_1+L_0+\omega_*^2 M\big)\,u=0
    \end{equation}
    that we get by fixing $\omega$ in \eqref{eq:ZGV1} to $\omega_*$.
\end{lemma}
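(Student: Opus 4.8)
The plan is to recognize the differentiated identity~\eqref{eq:dif_ZGV} as the statement that $v_*$ is a \emph{generalized eigenvector} of the QEP~\eqref{eq:QEP_Q} at $k_*$ — equivalently, that $(u_*,v_*)$ is a Jordan chain of length two — and then to invoke the classical fact that the presence of such a chain forces the algebraic multiplicity of the eigenvalue to be at least two.

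The first step is to note that $Q$ in~\eqref{eq:QEP_Q} is simply $W(\cdot,\omega_*)$, so the two have identical dependence on $k$; in particular $Q(k_*)=W(k_*,\omega_*)$ and $Q'(k_*)=-2k_*L_2+\iu L_1=W'(k_*,\omega_*)$. Since $u_*=u(k_*)$ is an eigenvector of~\eqref{eq:ZGV1} at $(k_*,\omega_*)$, it is nonzero and $Q(k_*)u_*=0$, so $k_*$ is already an eigenvalue of~\eqref{eq:QEP_Q} with eigenvector $u_*$. Substituting these identifications into~\eqref{eq:dif_ZGV} yields
\[
  Q'(k_*)\,u_* + Q(k_*)\,v_* = 0 ,
\]
which is exactly the second Jordan-chain relation for the matrix polynomial $Q$ at $k_*$. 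Thus $(u_*,v_*)$ is a Jordan chain of length two, and by the standard theory of matrix polynomials the order of $k_*$ as a root of $\det Q(k)$ — that is, its algebraic multiplicity as an eigenvalue — is at least the length of the longest Jordan chain issuing from $u_*$, hence at least two. Consequently $k_*$ is a multiple eigenvalue of~\eqref{eq:QEP_Q}.

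An alternative, more hands-on route that avoids Jordan-chain machinery is to set $p(k):=\det Q(k)$ and show directly that $p(k_*)=p'(k_*)=0$: the first vanishes because $u_*\in\ker Q(k_*)$, and Jacobi's formula gives $p'(k_*)=\operatorname{tr}\!\big(\operatorname{adj}Q(k_*)\,Q'(k_*)\big)$, which is $0$ when $\rank Q(k_*)\le n-2$ (then $\operatorname{adj}Q(k_*)=0$) and equals $\alpha\,z_*^\mup{H}Q'(k_*)u_*$ when $\rank Q(k_*)=n-1$ (writing $\operatorname{adj}Q(k_*)=\alpha\,u_*z_*^\mup{H}$), the latter being $0$ by the necessary condition~\eqref{eq:cond_ZGV}. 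I do not expect a genuine obstacle in either route; the only bookkeeping to get right is the degenerate case in which $k_*$ already has geometric multiplicity at least two — there the conclusion is immediate and does not even use~\eqref{eq:dif_ZGV} — together with the mild standing assumption that $Q$ is regular, so that the multiplicity of $k_*$ is a meaningful quantity.
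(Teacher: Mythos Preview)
Your proof is correct. Both routes you outline are valid: Route~1 reads \eqref{eq:dif_ZGV} as the second Jordan-chain relation $Q(k_*)v_*+Q'(k_*)u_*=0$ and invokes the standard fact that a Jordan chain of length two forces algebraic multiplicity at least two; Route~2 proves $p(k_*)=p'(k_*)=0$ directly via Jacobi's formula and the rank-one structure of the adjugate.

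The paper takes a closely related but slightly different tack: rather than working with \eqref{eq:dif_ZGV} and Jordan chains, it uses the left-eigenvector consequence \eqref{eq:cond_ZGV}, namely $z_*^\mup{H}Q'(k_*)u_*=0$, and then cites the classical result that for a \emph{simple} eigenvalue of a matrix polynomial one always has $z^\mup{H}Q'(k)u\ne 0$; the contradiction finishes the argument in one line. Your Route~2 is in effect a self-contained proof of that cited result (the adjugate computation is exactly how one shows it), while your Route~1 bypasses the left eigenvector altogether and reaches the conclusion constructively. The paper's version is shorter by outsourcing the key step to a reference; your versions are more self-contained and make the mechanism (either the Jordan chain or the vanishing of $p'$) explicit.
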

\begin{proof}
    If $z$ and $u$ are the left and right eigenvector of a simple eigenvalue
    $k$ of the QEP $Q(k)u = 0$, then it is well-known that
    $z^\mup{H}Q'(k)u\ne 0$, see, e.g., \cite[Prop. 1]{NeumaierSIMAX} or \cite[Thm. 3.2]{AndrewChuLancaster_Derivatives}.
    But, since \eqref{eq:cond_ZGV} holds at a ZGV point, it thus follows that $k_*$ is a multiple eigenvalue of \eqref{eq:QEP_Q}.
\end{proof}

Lemma \ref{lem:ZGV_multiple_eig} gives a necessary condition, but not every point $(k_*,\omega_*)$ such that $k_*$ is a multiple eigenvalue of \eqref{eq:QEP_Q} for a fixed $\omega=\omega_*$, corresponds to a ZGV point. In addition, \eqref{eq:dif_ZGV}
must hold as well, and this means that $v_*$ is a root vector of height two\footnote{An eigenvalue $\lambda_*$ of a quadratic matrix polynomial 
$Q(\lambda)$ is defective if and only if there exist an eigenvector (a root vector of height one) $v\ne 0$ and a root vector of height two $w$ such that
$Q(\lambda_*)v=0$ and $Q(\lambda_*)w+Q'(\lambda_*)v=0$, see, e.g., \cite[Sec.~2.2]{GuttelTisseur_NEP}.}. 
This is possible only if the algebraic multiplicity $m_a$ of $k_*$ as an eigenvalue of \eqref{eq:QEP_Q} is strictly greater than the geometric multiplicity $m_g=\dim(\ker(Q(k_*)))$. Also, to make sure that $\omega(k)$ is analytic in a neighborhood of $k_*$, we require that $\omega_*$ is a simple eigenvalue of $W(k_*,\omega)$, i.e., the generalized eigenvalue problem (GEP) that we get by fixing $k$ to $k_*$ in \eqref{eq:ZGV1}. Note that in some cases, it is possible to extend the dispersion curves so that they remain analytical also in points where the curves cross and multiple eigenvalues appear, see, e.g., \cite{LuSuBai_SIMAX}. To keep things concise, we will keep the requirement that $\omega_*$ is a simple eigenvalue of $W(k_*,\omega)$ and thus exclude points from candidates for ZGV points where two or more dispersion curves cross.

\begin{example}\rm\label{ex:introduction}
    We take
    \[
        L_2=\left[\begin{matrix}2 & 1  &0\cr 1 & 1 & 0\cr 0 & 0 & 1\end{matrix}\right],\
        L_1=\begin{bmatrix}[r]0 & 3  &0\cr -3 & 0 & 0\cr 0 & 0 & 0\end{bmatrix},\
        L_0=\left[\begin{matrix}-1.75 & 1  & 0\cr 1 & -1.75 & 0\cr 0 & 0 & -0.25\end{matrix}\right],\
        M=\left[\begin{matrix}3 & 1  &0\cr 1 & 4 & 0\cr 0 & 0 & 3.5\end{matrix}\right].
    \]
    We selected the matrices so that $L_0$ is symmetric, $L_1$ is skew-symmetric, and $L_2, M$ are symmetric positive definite. This way, the matrices have the same properties as the larger matrices in \cite{ZGV_JASA_23}, where ZGV points of Lamb waves in an austenitic steel plate are computed.

    The corresponding eigenvalue problem \eqref{eq:ZGV1} has five real ZGV points $(0,0.2673)$, $(0,0.4074)$, $(0,1.0628)$, and $(\pm 1.0642,0.2393)$,
    such that $\omega>0$, which are shown together with the real dispersion curves in Fig.~\ref{fig:introduction}. We consider only the solutions 
    with $\omega>0$ since each dispersion curve $\omega(k)$ has its counterpart $-\omega(k)$ and the same holds for the ZGV points.
    Note that the points $(\pm 0.4236,0.3503)$, where the dispersion curves cross, are not ZGV points, although $k_*=\pm 0.4236$ is a double eigenvalue of \eqref{eq:QEP_Q} for a fixed $w_*=0.3503$. 
        
    Due to the structure of the matrices, the dispersion curves are also symmetric with respect to the $\omega$-axis and there exist trivial ZGV points at $k=0$, which can be computed 
    from the GEP $(L_0+\omega^2 M)u=0$. Nontrivial ZGV points come in pairs $(\pm k_*, \omega_*)$ and we are interested
    in solutions where $k_*>0$.
    \begin{figure}[htb!]
        \centering
        \includegraphics[height=6cm]{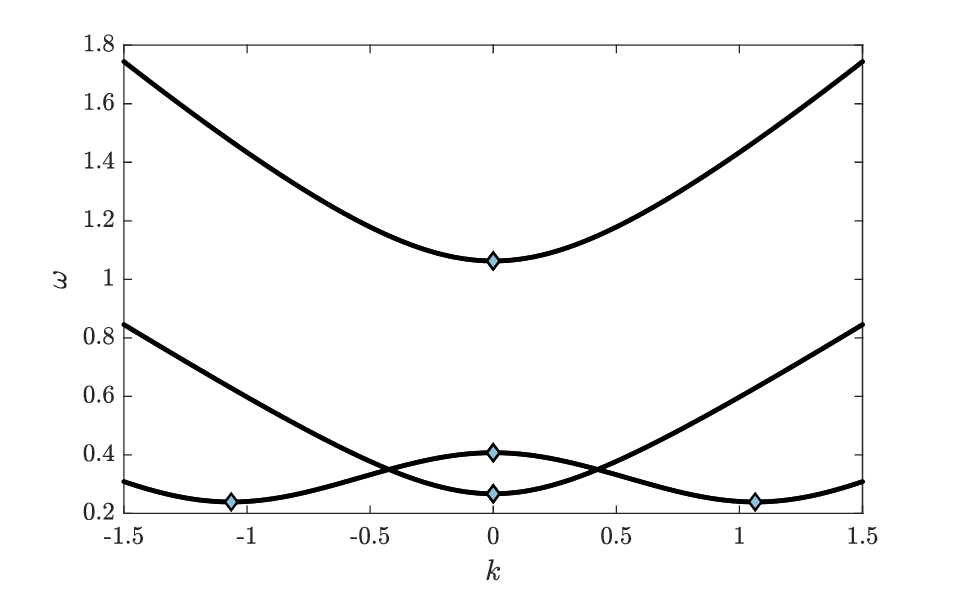}
        \caption{Real dispersion curves $\omega(k)$ and ZGV points of Example~\ref{ex:introduction}.}
        \label{fig:introduction}
    \end{figure}
\end{example}

\section{Auxiliary results}\label{sec:aux}

In this section, we introduce some related results and numerical methods that we will use in the following sections
to construct an efficient numerical method for finding the ZGV points of \eqref{eq:ZGV1}.

\subsection{Sylvester equation}\label{sub:Sylvester}
\emph{The Kronecker product} $A\otimes B$ of matrices $A\in\CC^{n\times m}$ and
$B\in\CC^{p\times q}$ is
a matrix of size $np\times mq$ of the block form
\[A\otimes B = \left[\begin{matrix}
            a_{11}B & \cdots & a_{1m}B\cr
            \vdots  &        & \vdots\cr
            a_{n1}B & \cdots & a_{nm}B
        \end{matrix}\right].
\]
For a matrix $X\in\CC^{m\times n}$, ${\rm vec}(X)\in\CC^{mn}$ is a \emph{vectorization} of matrix $X$, i.e, the vector obtained by stacking all columns of $X$ on top of each other. Our results are based on the well-known equality (see, e.g., \cite[Lem.~4.3.1]{Horn_Johnson_Topics_1991}):
\begin{equation}\label{eq:AXB}
    {\rm vec}(AXB) = (B^\mup{T}\otimes A)\,{\rm vec}(X),
\end{equation}
which holds for $A\in\CC^{m\times m}$, $B\in\CC^{n\times n}$, and $X\in\CC^{m\times n}$.
Suppose that we are looking for a matrix $X\in\CC^{m\times n}$ that satisfies \emph{the Sylvester equation}
\begin{equation}\label{eq:sylvester}
    AX+XB=C
\end{equation}
for given matrices $A\in\CC^{m\times m}$, $B\in\CC^{n\times n}$, and $C\in\CC^{m\times n}$.
It follows from \eqref{eq:AXB} that \eqref{eq:sylvester} is equivalent to
\begin{equation}\label{eq:sylvester_kronecker}
    (I_n\otimes A + B^\mup{T}\otimes I_m){\rm vec}(X)={\rm vec}(C).
\end{equation}
The Sylvester equation is therefore uniquely solvable when $I_n\otimes A + B^\mup{T}\otimes I_m$ is nonsingular, which is true if and only if $\lambda + \mu\ne 0$ for all possible pairs $(\lambda,\mu)$, where $\lambda$ is an eigenvalue of $A$ and $\mu$ is an eigenvalue of $B$, see, e.g., \cite[Thm.~4.4.6]{Horn_Johnson_Topics_1991}.
We could apply \eqref{eq:sylvester_kronecker} to numerically solve \eqref{eq:sylvester}, but this is not efficient since it, in general, leads to complexity ${\cal O}(m^3n^3)$ due to a matrix of size $mn\times mn$ in \eqref{eq:sylvester_kronecker}.

There exist more efficient numerical methods for the Sylvester equation, for instance, the Bartels-Stewart algorithm \cite{Bartels_Stewart}, which is appropriate in our setting, where we have to solve many Sylvester equations with the same matrices $A,B$ and different right-hand sides $C$. In the Bartels-Stewart algorithm, we first compute two Schur decompositions \[A=QRQ^\mup{H},\quad B=USU^\mup{H},\]
where matrices $Q,U$ are unitary, and matrices $S,R$ are upper triangular. Applying the above Schur decompositions to \eqref{eq:sylvester}, we obtain a new Sylvester equation with upper triangular matrices
\begin{equation}\label{eq:sylvesterRS}
    RY+YS=D,
\end{equation}
where $D=Q^\mup{H}CU$ and $Y=Q^\mup{H}XU$.
The columns of $Y=\left[\begin{matrix}y_1 & \cdots & y_n\end{matrix}\right]$ can now be computed from left to right as solutions of upper triangular linear systems
\[
    (R+s_{ii}I)y_i = d_i-\sum_{k=1}^{i-1}s_{ki}y_k,\quad i=1,\ldots,n,
\]
and then $X=QYU^\mup{H}$. If the Sylvester equation is nonsingular, then $R+s_{ii}I$ is nonsingular for all $i=1,\ldots,n$. With the above approach, we can efficiently solve the Sylvester equation \eqref{eq:sylvester} in complexity
${\cal O}(m^3+n^3)$, which is much less than ${\cal O}(m^3n^3)$, the complexity of solving \eqref{eq:sylvester_kronecker} as a large linear system.

\subsection{Multiparameter eigenvalue problems}\label{sec:mep}

A \emph{$d$-parameter eigenvalue problem} has the form
\begin{align}
    A_{10}x_1 & = \lambda_1 A_{11}x_1 + \cdots + \lambda_d A_{1d}x_1\nonumber  \\
              & \vdots \label{eq:mep}                                          \\
    A_{d0}x_d & = \lambda_1 A_{d1}x_d + \cdots + \lambda_d A_{dd}x_d,\nonumber
\end{align}
where $A_{ij}$ is an $n_i\times n_i$ complex matrix, and $x_i\ne 0$  are vectors for $i=1,\ldots,d$.
If \eqref{eq:mep} holds, then $(\lambda_1,\ldots,\lambda_d)\in\CC^d$ is an \emph{eigenvalue} and $x_1\otimes \cdots \otimes x_d$ is the corresponding \emph{eigenvector}. A generic multiparameter eigenvalue problem (MEP) \eqref{eq:mep} has $N=n_1\cdots n_d$ eigenvalues, which are roots of a system of $d$ multivariate characteristic polynomials
\begin{equation}\label{eq:mep_charpoly}
    p_i( \lambda_1,\ldots,\lambda_d):=\det(A_{i0}-\lambda_1 A_{i1} -\cdots -\lambda_d A_{id})=0,\quad i=1,\ldots,d.
\end{equation}

The problem \eqref{eq:mep} is related to a system of GEPs
\begin{equation}\label{eq:Delta}
    \Delta_1 z =\lambda_1 \Delta_0z,\quad \ldots,\quad \Delta_d z=\lambda_d\Delta_0z,
\end{equation}
where $z=x_1\otimes\cdots\otimes x_d$, and the $N\times N$ matrices
\begin{equation}\label{eq:Operdet_Delta0}
    \Delta_0=\left|\begin{matrix}A_{11} & \cdots & A_{1d}\cr
              \vdots &        & \vdots \cr
              A_{d1} & \cdots & A_{dd}\end{matrix}\right|_\otimes
    :=\sum_{\sigma\in S_d}{\rm sgn}(\sigma) \, A_{1\sigma_1}\otimes A_{2\sigma_2}\otimes \cdots \otimes A_{d\sigma_d}
\end{equation}
\[\Delta_i=\left|\begin{matrix}A_{11} & \cdots & A_{1,i-1} & A_{10} & A_{1,i+1} & \cdots & A_{1d}\cr
              \vdots &        & \vdots    & \vdots & \vdots    &        & \vdots \cr
              A_{d1} & \cdots & A_{d,i-1} & A_{d0} & A_{d,i+1} & \cdots & A_{dd}\end{matrix}\right|_\otimes,\quad i=1,\ldots,d,
\]
are called \emph{operator determinants}, which uses the Kronecker product for multiplication. For details see, e.g., \cite{AtkinsonBook}. If $\Delta_0$ is nonsingular, then we say that \eqref{eq:mep} is \emph{regular}. In such cases, the matrices $\Gamma_i:=\Delta_0^{-1}\Delta_i$, $i=1,\ldots,d$, commute, and the eigenvalues of \eqref{eq:mep} are
the joint eigenvalues of commuting matrices $\Gamma_1,\ldots,\Gamma_d$. Hence, if $N$ is not too large, a standard numerical approach to computing the eigenvalues of \eqref{eq:mep} is to explicitly compute $\Gamma_1,\ldots,\Gamma_d$ and then solve a joint eigenvalue problem. Alternatively, if we prefer not to multiply by $\Delta_0^{-1}$, we may solve a joint system of GEPs \eqref{eq:Delta}, see, e.g., \cite{HKP_JD2EP}.

If all linear combinations of matrices $\Delta_0,\Delta_1,\ldots,\Delta_d$ are singular, then \eqref{eq:mep} is a \emph{singular MEP}, which is much more difficult to solve. In such case, it is still possible that the polynomial system \eqref{eq:mep_charpoly} has finitely many roots that are the eigenvalues of \eqref{eq:mep}. Then, \eqref{eq:Delta} is a
joint system of $d$ singular matrix pencils whose regular eigenvalues are the solutions of \eqref{eq:mep}. For more details, see, e.g., \cite{KP_SingGep2}.
To solve a singular MEP numerically, we can apply a generalized staircase-type algorithm \cite{MP_Q2EP}, which returns
matrices $Q$ and $Z$ with orthonormal columns that yield projected smaller matrices $\widehat\Delta_i=Q^*\Delta_i Z$ for $i=0,\ldots,d$ such that $\widehat\Delta_0$ is nonsingular,  matrices $\widehat\Delta_0^{-1}\widehat\Delta_i$, $i=1,\ldots,d$, commute, and their joint eigenvalues are the eigenvalues of \eqref{eq:mep}.

The above approach for singular problems is used in \cite{ZGV_JASA_23}, where it is shown that ZGV points of \eqref{eq:ZGV1} correspond to the eigenvalues of a singular three-parameter eigenvalue problem  (3EP)
\begin{align}
    (\eta C_2 + \lambda C_1 + C_0) w                                                    & =0\nonumber       \\
    (\eta L_2 + \lambda L_1 + L_0 + \mu M)u                                             & =0\label{eq:3ep2} \\
    (\eta \widetilde L_2 + \lambda \widetilde L_1 + \widetilde L_0 + \mu \widetilde M)v & =0,\nonumber
\end{align}
where $\lambda=\iu k$, $\mu=\omega^2$, $\eta=(\iu k)^2$,
\[
    \widetilde L_2 = \begin{bmatrix} L_2 & 0 \\ 0 & L_2 \end{bmatrix},\
    \widetilde L_1 = \begin{bmatrix} L_1 & 0 \\ 2L_2 & L_1 \end{bmatrix},\
    \widetilde L_0 = \begin{bmatrix} L_0 & 0 \\ L_1 & L_0 \end{bmatrix},\
    \widetilde M = \begin{bmatrix} M & 0 \\ 0 & M \end{bmatrix},
\]
and
\begin{equation}\label{eq:C0C1C2}
    C_2 = \begin{bmatrix} 1 & 0 \\ 0& 0 \end{bmatrix},\
    C_1 = \begin{bmatrix}[r] 0 & -1 \\ -1 & 0 \end{bmatrix},\
    C_0 = \begin{bmatrix} 0 & 0 \\ 0 & 1 \end{bmatrix}.
\end{equation}
Note that the matrices \eqref{eq:C0C1C2} incorporate the relation between $\lambda$ and $\eta$ since $\det(\eta C_2 + \lambda C_1 + C_0)=\eta-\lambda^2$. This gives the first known numerical method that can compute all ZGV points without any initial approximations. However, since we first have to explicitly compute the corresponding $\Delta$ matrices of size $4n^2\times 4n^2$ and then use expensive numerical methods for singular problems, this approach is feasible only for problems \eqref{eq:ZGV1} with small matrices.

Let us remark that solvers for MEPs are not included in standard numerical packages for problems in linear algebra, but several numerical methods, also for singular problems, are implemented in the Matlab toolbox MultiParEig \cite{multipareig_28}.

\subsection{Method of fixed relative distance\label{sec:mfrd}}
Instead of solving \eqref{eq:3ep2}, which gives exact solutions, we can solve a simpler regular 3EP that returns approximations of candidates for ZGV points. From each individual candidate, we can then compute the exact ZGV point by applying the locally convergent Gauss-Newton method that we provide in Section \ref{sec:Gauss-Newton}. This approach, presented first in \cite{ZGV_JASA_23} for Hermitian problems, is based on Lemma \ref{lem:ZGV_multiple_eig} and the method of fixed relative distance (MFRD) from \cite{Elias_DoubleEig}.

We know from Lemma \ref{lem:ZGV_multiple_eig} that at $\omega_*$ corresponding to a ZGV point, the QEP in the variable $\lambda = \iu k$ with fixed $\mu_* = \omega_*^2$, i.e.,
\begin{equation}
    \label{eq:waveguide_problem_discrete_pert}
    \left( \lambda^2 L_2 + \lambda L_1 + L_0 + \mu_* M \right) u = 0 \,,
\end{equation}
has a multiple (generically double) eigenvalue~$\lambda_* = \iu k_*$.
Therefore, for certain $\widetilde\mu \ne \mu_*$ but close to $\mu_*$, the QEP
\begin{equation}\label{eq:pert_QEP}
    \left(\lambda^2 L_2 + \lambda L_1 + L_0 + \widetilde\mu M \right) u=0
\end{equation}
has at least two different solutions close to $\lambda_*$. The MFRD, adapted to \eqref{eq:waveguide_problem_discrete_pert} in~\cite{ZGV_JASA_23}, introduces the 3EP
\begin{align}
    (\eta C_2 + \lambda C_1 + C_0) w                                           & =0\nonumber           \\
    (\eta L_2 + \lambda L_1 + L_0 + \mu M )u                                   & =0\label{eq:3ep_pert} \\
    \left(\eta (1+\delta)^2 L_2 + \lambda (1+\delta) L_1 + L_0 + \mu M\right)v & =0\nonumber
\end{align}
in $\mu = \omega^2$, $\lambda = \iu k$, $\eta = \lambda^2$, and $C_0,C_1,C_2$ as in \eqref{eq:C0C1C2}. Therein, $\delta>0$ specifies the relative distance between the sought $\lambda$ and serves as a regularization parameter.
The 3EP \eqref{eq:3ep_pert} has an eigenvalue $(\widetilde\lambda,\widetilde \mu,{\widetilde\eta})$, where
$\left(\sqrt{\widetilde\mu},-\iu\widetilde\lambda\right)$ is close to a ZGV point, such that $\widetilde\lambda$ and $\widetilde\lambda(1+\delta)$ are eigenvalues of the initial problem~\eqref{eq:pert_QEP}.

Solutions of \eqref{eq:3ep_pert} can be obtained from a transformation into the corresponding system of GEPs
\eqref{eq:Delta}. The problem \eqref{eq:3ep_pert} is regular since the corresponding $2n^2\times 2n^2$ matrix
\begin{equation}
    \Delta_0 = \left|\begin{matrix}
        C_2              & C_1           & 0 \cr
        L_2              & L_1           & M\cr
        (1+\delta)^2 L_2 & (1+\delta)L_1 & M
    \end{matrix}\right|_\otimes\label{eq:threeeig_pert_1}
\end{equation}
is nonsingular for $\delta>0$. Hence, we can solve the GEP given by
\begin{equation}\label{eq:Delta_pert_3par}
    \Delta_1z = \lambda \Delta_0 z,
\end{equation}
where
\begin{equation}
    \Delta_1 = (-1)\left|\begin{matrix}
        C_2              & C_0 & 0\cr
        L_2              & L_0 & M\cr
        (1+\delta)^2 L_2 & L_0 & M
    \end{matrix}\right|_\otimes,\label{eq:threeeig_pert_2}
\end{equation}
by a standard numerical method for GEPs. In the ensuing, the obtained eigenvector $z$ is used in the GEP associated with $\mu$, namely,
\begin{equation}\label{eq:Delta_pert_3par_mu}
    \Delta_M z = \mu \Delta_0 z,
\end{equation}
with $\Delta_0$ defined before and
\begin{equation}\label{eq:threeeig_pert_M}
    \Delta_M  = (-1) \left|\begin{matrix}
        C_2              & C_1           & C_0 \cr
        L_2              & L_1           & L_0 \cr
        (1+\delta)^2 L_2 & (1+\delta)L_1 & L_0
    \end{matrix}\right|_\otimes,
\end{equation}
to obtain $\mu$ from the Rayleigh quotient
\begin{equation}\label{eq:Rayleigh_mu}
    \mu = \frac{z^\mup{H} \Delta_M\, z}{z^\mup{H} \Delta_0\, z} \,.
\end{equation}
Even for small values of $n$, computing all eigenvalues of \eqref{eq:Delta_pert_3par} is very demanding. Instead, we can apply a subspace iterative method (for instance, {\tt eigs} in Matlab) to find eigenvalues of \eqref{eq:Delta_pert_3par} close to a target $\iu k_0$.  We can apply this several times using different targets $k_0$ and, thus, scan an interval $[k_a,k_b]$ for ZGV points $(k_*,\omega_*)$; for more details, see Section \ref{sec:alg} and \cite{ZGV_JASA_23}. In the next section, we will show how we can exploit the structure of the matrices \eqref{eq:threeeig_pert_1} and \eqref{eq:threeeig_pert_2} to find eigenvalues of \eqref{eq:Delta_pert_3par} close to a target $\iu k_0$ much more efficiently. This improvement enables the computation of ZGV points for much larger matrices than in the original algorithm from \cite{ZGV_JASA_23}.

We remark that it is not possible to apply a similar approach with a subspace iteration to the 3EP \eqref{eq:3ep2}
because this problem is singular.

\section{Exploiting the structure}\label{sec:impr}
When employing a subspace iterative method such as the Krylov-Schur method \cite{Stewart_Krylov_Schur} or the implicitly restarted Arnoldi method \cite{Lehoucq_Sorensen_IRA} for the solution of the GEP \eqref{eq:Delta_pert_3par_mu}, the computational bottleneck in each step is the solution of a linear system of the form
\begin{equation}\label{eq:linsisDelta1}
    (\Delta_1-\sigma\Delta_0)z=\Delta_0 y.
\end{equation}
Even for sparse matrices $L_0, L_1, L_2, M$, which we obtain using, e.g., the finite element method, this makes the computation slow already for modest matrix size $n$. By exploiting the structure of the matrices $\Delta_0$ and $\Delta_1$ in a similar way as in \cite{MP_SylvArnoldi}, we can solve the linear system \eqref{eq:linsisDelta1} much more efficiently.

First, we note the block structure
\begin{equation}
    \label{eq:delta_structure_02}
    \Delta_0= \left[\begin{matrix}
            G_1 & G_2 \cr
            G_2 & 0\end{matrix}\right],\quad
    \Delta_1= \left[\begin{matrix}
            -G_0 & 0 \cr
            0    & G_2\end{matrix}\right],
\end{equation}
where
\begin{align}\label{eq:G0G1G2}
    G_0 & = L_0 \otimes M -  M\otimes L_0,\nonumber             \\
    G_1 & = L_1 \otimes M - (1+\delta) M\otimes L_1,            \\
    G_2 & = L_2 \otimes M - (1+\delta)^2 M\otimes L_2\nonumber.
\end{align}
Introducing the block notation $z=\left[\begin{matrix}z_1 \cr z_2\end{matrix}\right]$ and $y=\left[\begin{matrix}y_1 \cr y_2\end{matrix}\right]$, we can rewrite \eqref{eq:linsisDelta1} as
\begin{equation}
    \label{eq:sisDelta1block}
    \left[\begin{matrix}
            -G_0-\sigma G_1 & -\sigma G_2 \cr
            -\sigma G_2     & G_2\end{matrix}\right]
    \left[\begin{matrix}z_1 \cr z_2\end{matrix}\right] =
    \left[\begin{matrix}
            G_1y_1 + G_2y_2 \cr
            G_2y_1\end{matrix}\right].
\end{equation}
If we add the second block row, multiplied by $\sigma$, to the first block row, we get an equivalent lower block triangular system
\begin{equation}
    \label{eq:sisDelta1block2}
    \left[\begin{matrix}
            -G_0-\sigma G_1-\sigma^2 G_2 & 0 \cr
            -\sigma G_2                  & G_2\end{matrix}\right]
    \left[\begin{matrix}z_1 \cr z_2\end{matrix}\right] =
    \left[\begin{matrix}
            (G_1+\sigma G_2)y_1 + G_2y_2 \cr
            G_2y_1\end{matrix}\right].
\end{equation}
Hence, we can compute $z_1$ using the first block row
\begin{equation}\label{eq:solve_z1}
    (G_0+\sigma G_1+\sigma^2 G_2)z_1=-(G_1+\sigma G_2)y_1-G_2y_2,
\end{equation}
and it follows from the second equation and nonsingularity of $G_2$ that $z_2=y_1+\sigma z_1$.

To solve \eqref{eq:solve_z1} efficiently, we transform it into a Sylvester equation using the equalities from Subsection~\ref{sub:Sylvester}.
First, let $w:=-(G_1+\sigma G_2)y_1-G_2y_2$. If $y_1={\rm vec}(Y_1)$ and $y_2={\rm vec}(Y_2)$, then it follows from
\eqref{eq:G0G1G2} that the right-hand side of \eqref{eq:solve_z1} is $w={\rm vec}(W)$, where
\[ W = MY_1(L_1+\sigma L_2)^\mup{T}-((1+\delta)L_1+(1+\delta)^2L_2)Y_1M^\mup{T}
    -MY_2L_2+(1+\delta)^2L_2Y_2M^\mup{T}.
\]
In a similar way, we get from \eqref{eq:G0G1G2} that
\[G_0+\sigma G_1+\sigma^2 G_2 = L(0)\otimes M - M\otimes L(\delta),\]
where $L(\delta) := L_0+(1+\delta)\sigma L_1+(1+\delta)^2\sigma^2 L_2$. Thus, \eqref{eq:solve_z1} is equivalent to
\begin{equation}\label{eq:MLDeltaZ}
    MZ_1L(0)^\mup{T}-L(\delta)Z_1M^\mup{T} = W,
\end{equation}
where $z_1={\rm vec}(Z_1)$. Since $M$ is nonsingular, we can write the above as a Sylvester equation
\begin{equation}\label{eq:MLDeltaZ_Sylvester}
    Z_1L(0)^\mup{T}M^{-\mup{T}}-M^{-1}L(\delta)Z_1 = \widetilde {W},
\end{equation}
where
$
    \widetilde W = M^{-1}WM^{-\mup{T}}
    =
    Y_1(L_1+\sigma L_2)^\mup{T}M^{-\mup{T}}-M^{-1}((1+\delta)L_1+(1+\delta)^2L_2)Y_1
    -Y_2L_2M^{-\mup{T}}+M^{-1}(1+\delta)^2L_2Y_2$.
As we can solve the above Sylvester equation in complexity ${\cal O}(n^3)$, this is also the complexity of solving \eqref{eq:linsisDelta1}.

Let us remark that we can exploit the structure of $\Delta_0$ and $\Delta_M$ to compute $\mu$ in \eqref{eq:Rayleigh_mu} in complexity ${\cal O}(n^3)$ as well. Namely, we have
\[    \Delta_M= \left[\begin{matrix}
            G_3 & G_4 \cr
            G_4 & G_5\end{matrix}\right]
\]
with
\begin{align*}
    G_3 & = -L_1 \otimes L_0 + (1+\delta)L_0\otimes L_1,\nonumber              \\
    G_4 & = (1+\delta)^2 L_0 \otimes L_2 - L_2\otimes L_0,                     \\
    G_5 & = -(1+\delta)L_2 \otimes L_1 + (1+\delta)^2 L_1\otimes L_2\nonumber.
\end{align*}
To obtain \eqref{eq:Rayleigh_mu}, we
compute matrices
\begin{align*}
    T_1 & = -L_0Z_1L_1^\mup{T} + (1+\delta)L_1Z_1L_0^\mup{T}+(1+\delta)^2L_2Z_2L_0^\mup{T}-L_0Z_2L_2^\mup{T},\nonumber    \\
    T_2 & = (1+\delta)^2L_2Z_1L_0^\mup{T}-L_0Z_1L_2^\mup{T} - (1+\delta) L_1Z_2L_2^\mup{T}+(1+\delta)^2L_2Z_2L_1^\mup{T}, \\
    T_3 & = MZ_1L_1^\mup{T}-(1+\delta)L_1Z_1M^\mup{T} + MZ_2L_2^\mup{T} - (1+\delta)^2 L_2Z_2M^\mup{T},                   \\
    T_4 & = MZ_1L_2^\mup{T} - (1+\delta)^2 L_2Z_1M^\mup{T}
\end{align*}
and compute $\mu$ as
\begin{equation}\label{eq:mu_Rayleigh}
    \mu=\frac{z_1^\mup{H}t_1+z_2^\mup{H}t_2}{z_1^\mup{H}t_3+z_2^\mup{H}t_4},
\end{equation}
where $t_i={\rm vec}(T_i)$ for $i=1,\ldots,4$. As this computation involves only multiplications by $n\times n$ matrices, its complexity is ${\cal O}(n^3)$.

\section{Algorithm}\label{sec:alg}
For large problems, we suggest applying the MFRD to provide good initial approximations, which we subsequently refine using the local convergent method presented next.

\subsection{Gauss-Newton method}\label{sec:Gauss-Newton}

If we introduce $\lambda = \iu k$ and $\mu=\omega^2$ similarly to \eqref{eq:waveguide_problem_discrete_pert}, then we know from Section \ref{sec:theory} that for a ZGV point of \eqref{eq:ZGV1}, we have to find $\lambda,\mu\in \CC$ and $u,z\in\CC^n$ such that
\begin{align}
    (\lambda^2 L_2+ \lambda L_1+L_0+\mu M)u          & =0\nonumber         \\
    z^\mup{H} (\lambda^2 L_2+ \lambda L_1+L_0+\mu M) & =0\nonumber         \\
    z^\mup{H}(2 \lambda L_2 + L_1 )u                 & =0\label{eq:sis_2D} \\
    (u^\mup{H} u - 1)/2                                & =0\nonumber         \\
    (z^\mup{H} z - 1)/2                                  & =0.\nonumber
\end{align}The number of equations exceeds the number of unknowns by one in (\ref{eq:sis_2D}); hence, this is an overdetermined nonlinear system. However, it is a \emph{zero-residual system} because if $(k,\omega)$ is a ZGV point, and $u$ and $z$ are the corresponding right and left eigenvector, then all equations in \eqref{eq:sis_2D} are satisfied.

For solving \eqref{eq:sis_2D} from a good initial approximation, we apply the Gauss-Newton method \cite{Deuflhard_2011, Nocedal_Wright_2006}. To overcome the obstacle that the third equation in \eqref{eq:sis_2D} is not complex differentiable in $z$, we define $y=\overline z$, where $\overline \bullet$ denotes the complex conjugate, and rewrite  \eqref{eq:sis_2D} as
\begin{equation}\label{eq:sis_2Dconj}
    F(u,y,\lambda,\mu):=
    \left[\begin{matrix}(\lambda^2 L_2+ \lambda L_1+L_0+\mu M) u\cr
            (\lambda^2 L_2+ \lambda L_1+L_0+\mu M)^\mup{T} y\cr
            y^\mup{T}(2\lambda L_2 + L_1 )u\cr
            (u^\mup{H} u - 1)/2\cr
            (y^\mup{H} y - 1)/2\end{matrix}\right]=0.
\end{equation}
Suppose that $(u_k,y_k,\lambda_k,\mu_k)$ is an approximation to the solution of \eqref{eq:sis_2Dconj}. If 
$F(u,y,\lambda,\mu)=0$ then also
$F(\alpha u,\beta y,\lambda,\mu)=0$ for arbitrary $\alpha,\beta\in\CC$ such that $|\alpha|=|\beta|=1$. Because of that,
the vectors $u$ and $y$ are not uniquely defined and, although the last two equations in \eqref{eq:sis_2Dconj} are not
complex differentiable, as explained in \cite{LuSu_NLAA}, we can 
obtain a correction $(\Delta u_k,\Delta y_k, \Delta\lambda_k,\Delta\mu_k)$ for the update
\[(u_{k+1},y_{k+1},\lambda_{k+1},\mu_{k+1})=(u_k,y_k,\lambda_k,\mu_k)+(\Delta u_k,\Delta y_k,\Delta\lambda_k,\Delta\mu_k)\]
as a solution of the $(2n+3)\times (2n+2)$ least squares problem 
$$J_F(u_k,y_k,\lambda_k,\mu_k)\Delta s_k=-F(u_k,y_k,\lambda_k,\mu_k),$$
where $\Delta s_k =
    \left[\begin{matrix}\Delta u_k^\mup{T} & \Delta y_k^\mup{T} & \Delta\lambda_k & \Delta\mu_k\end{matrix}\right]^\mup{T}
$, and the Jacobian $J_F(u_k,y_k,\lambda_k,\mu_k)$ is
\begin{equation}\label{eq:jacobian}
    \left[\begin{matrix}
            \lambda_k^2 L_2+ \lambda_k L_1+L_0+\mu_k M & 0                                                    & (2\lambda_k L_2 + L_1)u_k         & Mu_k \cr
            0                                          & (\lambda_k^2 L_2+ \lambda_k L_1+L_0+\mu_k M)^\mup{T} & (2\lambda_k L_2 + L_1)^\mup{T}y_k & M^\mup{T}y_k \cr
            y_k^\mup{T}(2\lambda_k L_2 + L_1 )         & u_k^\mup{T}(2\lambda_k L_2 + L_1 )^\mup{T}           & 2y_k^\mup{T}L_2u_k                & 0\cr
            u^\mup{H}                                  & 0                                                    & 0                                 & 0\cr
            0                                          & y^\mup{H}                                            & 0                                 & 0\end{matrix}\right].
\end{equation}

Besides an initial approximation $(k_0,\omega_0)$ for the ZGV point, the method requires initial approximation for
the right and left eigenvector as well. If we do not have them, then usually a good choice is to use a random vector from the space spanned by the right and left singular vectors that belong to a few of the smallest singular values of $W(k_0,\omega_0)$.

The Gauss-Newton method converges locally quadratically for a zero-residual problem if the Jacobian $J_F$ has full rank at the solution, see, e.g., \cite[Section 4.3.2]{Deuflhard_2011} or \cite[Section 10.4]{Nocedal_Wright_2006}.
We can show that the Jacobian $J_F(u_*,y_*,\lambda_*,\mu_*)$ has full rank at a generic ZGV point, where $k_*=-\iu\lambda_*$ is a double eigenvalue of \eqref{eq:QEP_Q} for $\omega=\sqrt{\mu_*}$, and $u_*$ and $\overline y_*$ are the corresponding right and left eigenvector. For the proof, see Lemma \ref{lem:jacobi_fullrank} in the appendix. If ZGV points exist where the multiplicity of $k_*$ is higher than two, then, at such points, we can expect a linear convergence.

Let us note that the above Gauss-Newton method also converges to points $(\lambda,\mu)$ where there exists a solution of \eqref{eq:sis_2D}, and these include the points where the dispersion curves cross. At such points, we can also expect a linear convergence. If the method has converged to a zero-residual solution, then we can verify if a computed point is a ZGV point by checking if $\omega_*$ is indeed a simple eigenvalue of $W(k_*,\omega)$.

We also remark that for a special case when $W(k,\omega)$ is Hermitian for real values $k$, the left eigenvector corresponding to a right eigenvector $u$ is $u^\mup{H}$, which happens, for example, in the case when the matrices $L_i$ are alternately symmetric/anti-symmetric and $M$ is symmetric positive definite. For such problems, we can use a more efficient Newton's method, see \cite[Sec. IV]{ZGV_JASA_23}.

\subsection{Scanning method}

The following algorithm uses the MFRD from Section \ref{sec:mfrd} to scan a wavenumber interval $[k_a,k_b]$ and
compute ZGV points $(k_*,\omega_*)$ such that $k\in[k_a,k_b]$. The matrices $\Delta_0$, $\Delta_1$ and $\Delta_M$ refer to \eqref{eq:threeeig_pert_1}, \eqref{eq:threeeig_pert_2} and \eqref{eq:threeeig_pert_M}, respectively, but we do not have to compute them explicitly.

    {
        \noindent\vrule height 0pt depth 0.5pt width \linewidth \\*
        \textbf{Algorithm~1: Scanning method for ZGV points} \\
        \textbf{Input:} $n\times n$ matrices $L_2, L_1, L_0, M$, interval $[k_a,k_b]$, default step size $\Delta k$ \\
        \textbf{Output:} ZGV points $(k_*, \omega_*)$\\*[-1.2ex]
        \vrule height 0pt depth 0.3pt width \linewidth \\[2pt]
        \begin{tabular}{ll}
            {\footnotesize \phantom{1}1:} & set $k_0=k_a$ \cr
            {\footnotesize \phantom{1}2:} & while $k_0<k_b$ \cr
            {\footnotesize \phantom{1}3:} & \hbox{}\quad find $m$ eigenvalues of $\Delta_1z = \lambda \Delta_0 z$ closest to $\lambda_0 = \iu k_0$\cr
            {\footnotesize \phantom{1}4:} & \hbox{}\quad for each $\lambda$ and eigenvector $z$\cr
            {\footnotesize \phantom{1}5:} & \hbox{}\quad\quad compute $\mu=z^\mup{H} \Delta_M z /z^\mup{H} \Delta_0 z$\cr
            {\footnotesize \phantom{1}6:} & \hbox{}\quad\quad if $|\textrm{Re}(\lambda)|$ and $|\textrm{Im}(\mu)|$ are both small then \cr
            {\footnotesize \phantom{1}7:} & \hbox{}\quad\quad\quad apply Gauss-Newton method to solve \eqref{eq:sis_2D} with initial guess $(\textrm{Im}(\lambda),\textrm{Re}(\mu))$\cr
            {\footnotesize \phantom{1}8:} & \hbox{}\quad\quad\quad if the method converged to $(\lambda_*,\mu_*)$, and \eqref{eq:cond_ZGV} holds then \cr
            {\footnotesize \phantom{1}9:} & \hbox{}\quad\quad\quad\quad add $(k_*,\omega_*) = (-\iu \lambda_*,\sqrt{\mu_*})$ to the list of ZGV points \cr
            {\footnotesize 10:}           & \hbox{}\quad\quad set $k_0=\max(k_0+\Delta k,0.95\cdot \max\{k_*\in \textrm{ZGV points}\})$\cr
        \end{tabular}\\*
        \vrule height 0pt depth 0.5pt width \linewidth \\
    }

In the following, we provide additional details about Algorithm 1.
\begin{itemize}
    \item In line 3, we can apply any subspace method, for instance {\tt eigs} in Matlab. Thereby, it is important that we do not generate the matrices $\Delta_0$ and $\Delta_1$ explicitly. Internally, the {\tt eigs} function iteratively solves  
    the linear system
          $(\Delta_1-\sigma\Delta_0)z=\Delta_0 y$, and it is possible to provide a pointer to a custom implementation thereof. We do so, thereby exploiting the relation with the Sylvester equation from Section \ref{sec:impr}.
    \item Alternatively, if $n$ is small enough, we can compute all eigenvalues of $\Delta_1z = \lambda \Delta_0 z$.
          This approach gives all ZGV points in just one run; hence, the scanning is not needed, and it could be more efficient than solving the related singular 3EP \eqref{eq:3ep2}.
    \item In line 5, we compute $\mu$ using \eqref{eq:mu_Rayleigh}, which avoids generating $\Delta_0$ and $\Delta_M$. Note that this expression
         might not return correct $\mu$ if $\lambda$ is a multiple eigenvalue of $\Delta_1z = \lambda \Delta_0 z$, e.g., when
         there exist different ZGV points with the same $k=-\iu \lambda$. This is very unlikely to appear, except for the trivial
         ZGV points at $\lambda=0$. This makes ZGV points with $k_*$ close to zero very difficult to compute.
         
    \item For an initial approximation for the left and right eigenvector, we take the left and right singular vector for
    the smallest singular value of $\lambda^2 L_2+ \lambda L_1+L_0+\mu M$.
    \item In line 10, we update the target $k_0$ in such a way that it is unlikely that the method will miss ZGV points in the interval. We assume that if the subspace iteration method in line 3 returns some approximations, it does not miss any of the closest ZGV points. The idea is to increase the target either for a default step $\Delta k$ 
    or use a larger step if some ZGV points were found in the last loop.
\end{itemize}
It is difficult to provide the best values for the parameters $k_a, k_b, \Delta k, m$, as they are problem-dependent. For a sensible choice when treating guided waves in plates, see the implementation in the package GEWtool \cite{kiefer_gewTool} and the numerical examples in Section~\ref{sec:num_exp}.

\section{Waveguide model}\label{sec:waveguide}
While the proposed approach can be applied, quite generally, to compute critical points on eigencurves of parameter-dependent quadratic eigenvalue problems, this work is motivated by a particular application, namely the modeling of elastic waves propagating along structures of constant cross-section, commonly referred to as waveguides. In this context, a finite-element discretization of the cross-section yields matrices with the properties discussed above. Hence, we will briefly summarize the formulation that has been deployed to obtain the matrices used in our numerical experiments.
Consider a waveguide of linearly elastic material and arbitrary cross-section~$\Gamma$ as depicted in Fig.~\ref{fig:geometry}a. Its mass density is denoted as $\rho$, and its 4$^\text{th}$-order stiffness tensor~$\ten{c}$ is of arbitrary anisotropy. In absence of external loads, the mechanical displacements $\tilde{\ten{u}}(x,y,z,t)$ are governed by the boundary-value problem
\begin{subequations}\label{eq:bvp_real}
    \begin{align}
        \nabla \cdot \ten{c} : \nabla \tilde{\ten{u}} - \rho \partial_t^2 \tilde{\ten{u}} & = \ten{0} \quad \text{in} \quad \Gamma \times \mathbb{R} \,,           \\
        \tilde{\ten{u}}                                                                   & = \ten{0} \quad \text{on} \quad \partial\Gamma_D \times \mathbb{R} \,, \\
        \dirvec{\mup{n}} \cdot \ten{c} : \nabla \tilde{\ten{u}}                           & = \ten{0} \quad \text{on} \quad \partial\Gamma_N \times \mathbb{R} \,.
    \end{align}
\end{subequations}
Therein, $\partial_t$ is the partial derivative with respect to time~$t$, and $\nabla = \dirvec{i} \partial_i$ is the Nabla operator.
$\partial\Gamma_D$ and $\partial\Gamma_N$ denote the parts of the cross-sectional boundary where Dirichlet and Neumann boundary conditions are imposed, respectively. Lastly,  $\dirvec{\mup{n}}$ is the outward unit normal vector.

\begin{figure}[tbh]
    \centering
\begin{tikzpicture}[
inner sep=1pt, outer sep=1pt,
scale=1,
]
\footnotesize
\node[below] at (-1.3cm, -1cm-1em) {(a) arbitrary 2d cross-section};

\path (-1, -1) coordinate (p1) -- 
(1, 1) coordinate (p2) -- 
(0, 2) coordinate (p22) -- 
(-3, 2) coordinate (p3) -- 
(-4, 0) coordinate (p4) -- 
cycle;

\draw [thick, rounded corners=.5cm, fill=color_plate] (p1) --  (p2) -- (p22) -- (p3) -- (p4) -- cycle;

\draw [draw=inferno3, very thick, rounded corners=.5cm] ($(p1)!0.3!(p2)$) -- node[sloped, inner sep=0pt, below, pos=.6, anchor=north west, minimum height=.8cm, minimum width=.3cm] (nvec){}  (p2) -- node[inner sep=0pt, pos=.6] (N){} (p22) -- ($(p22)!0.5!(p3)$);

\draw [draw=viridis3!80!green, very thick, rounded corners=.5cm] ($(p22)!0.5!(p3)$) -- (p3) -- (p4) -- node[inner sep=0pt, pos=.2] (S) {} (p1) -- ($(p1)!0.3!(p2)$);

\node[] at (-0.2,1) {$\Gamma$};
\node[] at (-1.2,-0.4) {$\ten{c}$, $\rho$};
\node[] (labelS) at ($(S) + (-170:1.2cm)$) {$\partial\Gamma_D$};
\draw[->] (labelS.east) .. controls +(-20:.4) and +(-120:.4) .. (S);
\node[] (labelN) at ($(N) + (0:1.2cm)$) {$\partial\Gamma_N$};
\draw[->] (labelN.west) .. controls +(170:.2) and +(40:.4) .. (N);

\draw[-Latex] (nvec.north west) coordinate (O) -- (nvec.south west) coordinate (B) node[below left] {$\dirvec{\mup{n}}$};
\draw pic [draw=black, very thin, angle radius=3mm, pic text=$.$, angle eccentricity=.5] {right angle = p1--O--B};

\coordinate (kVec) at (-1.6,0.7);
\draw[fill=white] (kVec) circle (2.5pt) node[above, inner sep=5pt, align=center]{$\ten{k}$};
\draw[fill=black] (kVec) circle (1pt);

\coordinate (Ocos) at (-4.6,.9); 
\draw[-latex, thick] (Ocos) -- +(1.5em, 0) node[below, inner sep=3pt, pos=1.1]{$\dirvec{y}$};
\draw[-latex, thick] (Ocos) -- +(0, 1.5em) node[left, inner sep=2pt, pos=1]{$\dirvec{z}$};
\draw[fill=white] (Ocos) circle (2.5pt) node[below left, inner sep=1pt, align=center]{$\dirvec{x}$};
\draw[fill=black] (Ocos) circle (1pt);




\end{tikzpicture}\hspace{1cm}%
\begin{tikzpicture}[>=stealth,
plate/.style={fill=color_plate, minimum width=\l, minimum height=\h, inner sep=0pt},
]
\footnotesize
\pgfmathsetmacro\h{1 cm}
\pgfmathsetmacro\l{4.7 cm}

\node[below] at (0, -1.2cm-1em) {(b) plate (1d cross-section)};

\node[plate] (plate) at (0, 0) {};
\draw[thin] (plate.north west) -- (plate.north east); 
\draw[thin] (plate.south west) -- (plate.south east); 

\coordinate (domainTop) at ([xshift=-1.2cm]plate.north);
\coordinate (domainBottom) at ([xshift=-1.2cm]plate.south);
\coordinate (domainCenter) at ($(domainTop)!0.5!(domainBottom)$);

\draw[thick] (domainTop) -- (domainBottom);
\fill[inferno3] (domainTop) circle (1.5pt);
\fill[viridis3!80!green] (domainBottom) circle (1.5pt);
\draw[->] (domainCenter)+(3ex,0) node[right] {$\Gamma$} -- (domainCenter);
\draw[<-, shorten <= 1.5pt] (domainTop) .. controls +(90:.3) and +(-180:.2) .. +(3ex,2ex) node[right]{$\partial\Gamma_N$};
\draw[<-, shorten <= 1.5pt] (domainBottom) .. controls +(-90:.3) and +(-180:.2) .. +(3ex,-2ex) node[right]{$\partial\Gamma_D$};

\draw[Bar-Bar] ([xshift=4pt]plate.north east) -- node[right]{$h$} ([xshift=4pt]plate.south east);

\node[left, yshift=0pt] at (plate.east) {$\ten{c}, \rho$};

\coordinate (kVec) at (0.2,0);
\draw[->, thick] (kVec) -- +(2em,0) node[right, inner sep=2pt]{$\ten{k}$};

\begin{scope}[shift={(-0.5*\l-2.5em, 0)}]
    \draw[-latex, fill opacity = 1] (0, 0) -- (1.5em, 0) node[below, inner sep=3pt, pos=1.1]{$\dirvec{x}$};
    \draw[-latex, fill opacity = 1] (0, 0) -- node[left, inner sep=2pt, pos=1]{$\dirvec{y}$} (0, 1.5em);
    \draw[fill=white] (0,0) circle (3pt) node[below left, inner sep=2pt, align=center]{$\dirvec{z}$};
    \draw[fill=black] (0,0) circle (1pt);
\end{scope}

\end{tikzpicture}
    \caption{Waveguide geometries. The waveguides extend infinitely along $\dirvec{x}$, which corresponds to the wave vector orientation. (a) Arbitrary two-dimensional cross-section. (b) A plate confines waves only in the one-dimensional thickness direction.
        $\Gamma$: cross-sectional domain, $\partial\Gamma_D$ Dirichlet boundary, $\partial\Gamma_N$: Neumann boundary, $\dirvec{\mup{n}}$: unit normal to the boundary, $\ten{c}$: stiffness tensor, $\rho$: density, $\ten{k}$: wave vector, $h$: thickness.}
    \label{fig:geometry}
\end{figure}
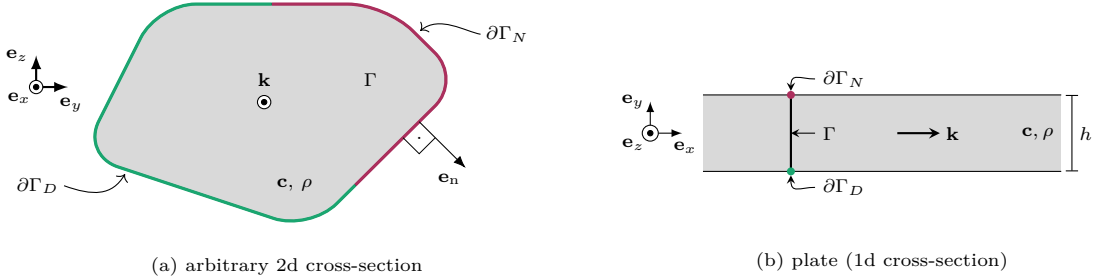

Due to the translational invariance in time $t$ and the axial coordinate~$x$, modal solutions are of the form $\tilde{\ten{u}}(x,y,z,t) = \ten{u}(k,y,z,\omega)\e^{\iu(k x - \omega t)}$. Inserting into (\ref{eq:bvp_real}), we obtain the waveguide problem
\begin{subequations}%
    \begin{align}%
        \label{eq:wp_motion}%
        \left[(\iu k)^2 \mathcal{L}_2 + \iu k \mathcal{L}_1 + \mathcal{L}_0 + \omega^2 \rho \ten{I} \right] \ten{u} & = \ten{0} \quad \text{in} \quad \Gamma \,,           \\
        \label{eq:wp_dirichlet}
        \ten{u}                                                                                                     & = \ten{0} \quad \text{on} \quad \partial\Gamma_D \,, \\
        \label{eq:wp_neumann}
        \left[ \iu k \mathcal{B}_1 + \mathcal{B}_0 \right] \ten{u}                                                  & = \ten{0} \quad \text{on} \quad \partial\Gamma_N \,,
    \end{align}\label{eq:waveguide_problem}%
\end{subequations}
which describes the plane harmonic guided wave solutions $(k, \omega, \ten{u})$ of interest. In the above, $\mathcal{L}_i$ and $\mathcal{B}_j$ are differential operators, which are explicitly given, using the 2$^\text{nd}$-order tensors $\ten{c}_{ij} = \dirvec{i} \cdot \ten{c} \cdot \dirvec{j}$, as
\begin{subequations}
    \begin{align}
        \mathcal{L}_2 & = \ten{c}_{xx} \,,                                                                                                \\
        \mathcal{L}_1 & = (\ten{c}_{xy} + \ten{c}_{yx}) \partial_y + (\ten{c}_{xz} + \ten{c}_{zx}) \partial_z \,,                         \\
        \mathcal{L}_0 & = \ten{c}_{yy} \partial_y^2 + (\ten{c}_{yz} + \ten{c}_{zy}) \partial_y \partial_z + \ten{c}_{zz} \partial_z^2 \,,
    \end{align}\label{eq:waveguide_operators_differential}%
\end{subequations}
and
\begin{subequations}
    \begin{align}
        \mathcal{B}_1 & = \ten{c}_{nx} \,,                                      \\
        \mathcal{B}_0 & = \ten{c}_{ny} \partial_y + \ten{c}_{nz} \partial_z \,.
    \end{align}\label{eq:boundary_operators_differential}%
\end{subequations}
Equations (\ref{eq:waveguide_problem})-(\ref{eq:boundary_operators_differential}) are also valid for the special case of the infinite plate as depicted in Fig.~\ref{fig:geometry}b (the reader may refer to \cite{ZGV_JASA_23} for a succinct derivation). A plane strain field should be assumed in this case, i.e., all terms in $\partial_z$ vanish while the equations remain otherwise unaffected. The waveguide's cross-section can thereby be modeled by a one-dimensional discretization, as the displacements do not depend on $z$. Furthermore, by expressing the problem in cylindrical coordinates, a similar formulation can be obtained for waves propagating along full or hollow cylinders. We also note that the above vector-field problem can be reduced to the scalar wave equation representing waves in a fluid medium, i.e., scalar acoustics.

The waveguide problem in (\ref{eq:waveguide_problem}) represents a differential eigenvalue problem, which we discretize using a Galerkin finite-element procedure~\cite{kauselSemianalyticHyperelementLayered1977,Huang1984b,Gavric1995a,Hladky-Hennion1996,Gravenkamp2012}.\footnote{Specifically, we opt, in this work, for a particular type of high-order polynomial interpolation (sometimes referred to as spectral elements)  for one-dimensional cross-sections \cite{Gravenkamp2012,Gravenkamp2014} and for nonuniform rational B-splines (NURBS) \cite{Gravenkamp2016} to discretize complex 2D cross-sections. A review of various shape functions in the context of semi-analytical methods is given in \cite{Gravenkamp2019}.} This numerical formulation yields matrices $L_i, M \in \mathbb{R}^{n\times n}$ such that
\begin{align}
    \label{eq:waveguide_problem_discrete}
    W(k,\omega) u := \left[(\iu k)^2 L_2 + \iu k L_1 + L_0 + \omega^2 M \right] u & = 0
\end{align}
approximates the original problem (\ref{eq:wp_motion}) and respects the boundary conditions (\ref{eq:wp_dirichlet}) and (\ref{eq:wp_neumann}). Therein, $u$ is the vector of coefficients corresponding to the chosen discretization.
Note that, for a nondissipative material and real-valued parameters $k$ and $\omega$, the waveguide operator $W(k,\omega)$ is Hermitian, and, furthermore, $M$ is positive definite.

Elastic guided waves as described by (\ref{eq:waveguide_problem_discrete}) usually exhibit several ZGV points $(k_*, \omega_*)$. 
As the $\Delta$-matrices scale as $4n^2\times 4n^2$ (in eq. \eqref{eq:3ep2}) or $2n^2\times 2n^2$ (in eq. \eqref{eq:3ep_pert}), large waveguide problems quickly lead to prohibitively large computational demands, effectively rendering the methods from \cite{ZGV_JASA_23} unusable. There are mainly two crucial situations where this is the case: (i)~plates and cylinders with many layers, and (ii)~waveguides of two-dimensional cross-section. In the following, we demonstrate that the method presented in Section \ref{sec:impr} is capable of computing ZGV points even for such complex structures. 

\section{Numerical experiments}\label{sec:num_exp}
All numerical experiments are performed on an Apple M1 Pro notebook with 32\,GB of memory. The regularization parameter of the MFRD method is chosen as $\delta = 10^{-2}$ for all computations. 

\subsection{Austenitic steel plate}\label{sub:steelplate}
An orthotropic austenitic steel plate exhibits many ZGV points. They were computed in \cite{ZGV_JASA_23} with an MFRD algorithm that uses {\tt eigs} from Matlab 
to compute eigenvalues of \eqref{eq:Delta_pert_3par} close to a target $\iu k_0$. The matrices $\Delta_1$ and $\Delta_0$ were computed explicitly and are represented as sparse matrices. The method in Matlab 
first computes an LU sparse factorization, which is then used to solve \eqref{eq:linsisDelta1} in each step of the method. The example given in the mentioned reference is of size $n=39$, which yields $\Delta_i$-matrices of size $3042\times 3042$. The computational time with the old method was 12\,s.

Instead, we can use the approach proposed in Section~\ref{sec:impr} to solve \eqref{eq:linsisDelta1} without explicitly constructing the matrices $\Delta_1$ and $\Delta_0$. In the initial phase, we compute the Schur decompositions of $L(0)^\mup{T}M^{-\mup{T}}$ and $M^{-1}L(\delta)$ from \eqref{eq:MLDeltaZ_Sylvester} and then use them to solve \eqref{eq:MLDeltaZ_Sylvester} and, thus, obtain the solution of \eqref{eq:linsisDelta1}. In this way, we never use matrices larger than $n\times n$. Applying this procedure to the numerical example of \cite{ZGV_JASA_23} with $n=39$ finds all 18 ZGV points in 0.5\,s, which is more than twenty times faster than with the previous method. Note that the computational times are difficult to compare, as the strategy to update the target wavenumber also changed. More importantly, our new procedure scales favorably with the problem size, which is demonstrated by the following examples. 

\subsection{Fluid-filled pipe}\label{sub:pipe}
The following example, taken from Cui et al.~\cite{cuiBackwardWavesDouble2016}, consists of a water-filled steel pipe with a wall thickness of $h = 0.5\,\mathrm{mm}$ and an inner radius of 9.5\,mm, see Fig.~\ref{fig:pipeGeometry}.
The steel pipe is characterized by shear and longitudinal wave speeds of $c_\mathrm{t} = 3200\,\frac{\text{m}}{\text{s}}$, $c_\ell = 5900\,\frac{\text{m}}{\text{s}}$ and a mass density of $7900\,\frac{\text{kg}}{\text{m}^3}$. The water inside the pipe has a mass density of $1000\,\frac{\text{kg}}{\text{m}^3}$ and a bulk wave speed of $1500\,\frac{\text{m}}{\text{s}}$.
As the fluid domain is relatively large compared to the bulk wavelength in water, this problem requires a considerable number of degrees of freedom. Specifically, we used one element of 7$^\text{th}$ order (eight nodes) to discretize the pipe wall, while the fluid domain required a polynomial degree of 140 to yield accurate results within the selected frequency range.\footnote{While such large polynomial degrees are generally uncommon in the Finite Element Method, they have been found to be remarkably efficient in this particular context of waveguide modeling. This is because the bottleneck in the computation of the dispersion relations is the (complete) solution of an eigenvalue problem. The costs for this solution for moderate matrix sizes depend mainly on the matrix size rather than its sparsity. Hence, in contrast to most finite element applications, which require mainly the solution of linear systems of equations, it is, in this case, desired to obtain small matrices, even if they are dense. The advantage of such large element orders was described in \cite{Gravenkamp2012} and discussed in more detail in \cite{Gravenkamp2014}.}
In the pipe wall, we assume displacements $\ten{u}(r)\e^{\iu (k x + n_\varphi \varphi + \omega t)}$ of integer circumferential order $n_\varphi$ (similarly for the acoustic pressure in the fluid). This enables us to discretize only a radial line as sketched in Fig.~\ref{fig:pipeGeometry}. As an additional challenge, the pressure-displacement formulation leads to non-Hermitian matrices, and a complex formulation of the Newton-iteration refinement as described in Section \ref{sec:Gauss-Newton} is required. For comparison with the literature, we choose $n_\varphi = 0$ and $\ten{u} = u_x \dirvec{x} + u_r \dirvec{r}$ to obtain the so-called longitudinal modes $\mathrm{L}(0,m)$~\cite{cuiBackwardWavesDouble2016}. Overall, this results in matrices $L_i$ and $M$ in (\ref{eq:waveguide_problem_discrete}) of size $157 \times 157$ and $\Delta$-matrices of size $49298 \times 49298$.
As demonstrated by Cui et al.~\cite{cuiBackwardWavesDouble2016}, multiple ZGV points are found in the frequency region close to the backward wave of the empty pipe, i.e., the curve with negative slope in Fig.~\ref{fig:pipeDispersion}. Using parameters $m = 8$, $k_a h = 0.01$, $k_b h = 2$, $\Delta k h = 0.05$, our algorithm locates all 15 ZGV points in 11\,s. The result is depicted in Fig.~\ref{fig:pipeDispersion}.

\begin{figure}[tbh]
    \centering\footnotesize
    \tikzsetnextfilename{pipe_geometry}
    \subfloat[\label{fig:pipeGeometry}]{\raisebox{10mm}{
\def\rb{3.5}
\def\ra{3}
\tikzset{layerStyle/.style={minimum width={\l cm}, minimum height={\h cm}, inner sep=0pt}, anchor=south}
\tikzset{%
    layer/.pic={%
        \node[layerStyle,pic actions] (lay) {};
        \draw[thin] (lay.north west) -- (lay.north east); 
        \draw[thin] (lay.south west) -- (lay.south east); 
    }
}
%
\begin{tikzpicture}[>=stealth,]
\footnotesize

\node[circle, draw=black, fill=black!10, inner sep=0pt, minimum size={\rb cm}, anchor=center] (pipeWall) at (0,0) {};
\node[circle, draw=black, fill=blue!7, inner sep=0pt, minimum size={\ra cm}, anchor=center] (innerFluid) at (0,0) {};

\draw[<-, shorten <= 1.5pt] (pipeWall.-160) .. controls +(-160:0.5) and +(0:0.2) .. +(-0.8cm,1.5ex) node[left]{pipe wall};
\node[anchor=north] (fluidLabel) at (0,-2ex) {fluid};

\draw[-, thick] (0,0) -- (pipeWall.70) coordinate[pos=0.5](domain);

\draw[<-, shorten <= 1.5pt] (domain) .. controls +(160:0.7) and +(-90:0.5) .. +(-2.7cm,1.2cm) node[above]{line of discretization $\Gamma$};

\begin{scope}[shift={(70:2)}]
    \draw[-latex, fill opacity = 1] (0, 0) -- (70:1.5em) node[pos=1, anchor=east, inner sep=2pt]{$\dirvec{r}$};
    \draw[-latex, fill opacity = 1] (0, 0) -- (160:1.5em) node[left, inner sep=1pt]{$\dirvec{\varphi}$};
    \draw[fill=white] (0,0) circle (3pt) node[below right, inner sep=3pt]{$\dirvec{x}$};
    \draw[fill=black] (0,0) circle (1pt);
\end{scope}

\end{tikzpicture}}\quad}
    \subfloat[\label{fig:pipeDispersion}]{\includegraphics[height=6cm]{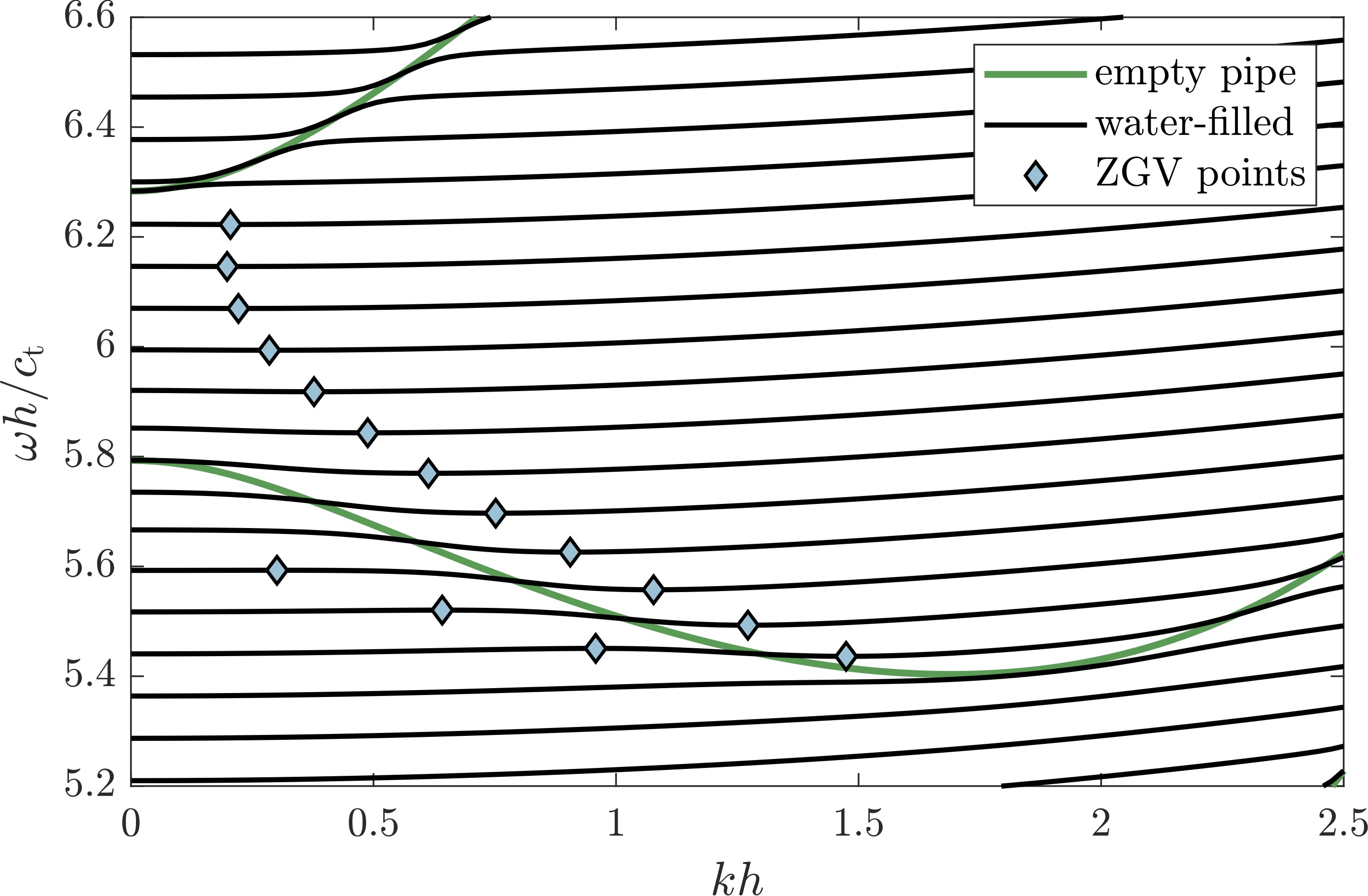}}
    \caption{Longitudinal waves in a water-filled steel pipe. (a)~Geometry: cross-section of the water-filled circular steel pipe of inner radius 9.5\,mm and outer radius 10\,mm. (b)~Dispersion curves of longitudinal modes, i.e., $u_x$-$u_r$-polarized waves. The dispersion curves of the empty pipe are shown for comparison. The fluid-filled pipe exhibits 15 ZGV points close to the backward wave of the free pipe.}
    \label{fig:pipe}
\end{figure}

\subsection{Composite plate}\label{sub:composite_plate}
A plate consisting of many layers requires a large number of degrees of freedom to describe guided waves since each layer is represented by at least one finite element. We use the proposed method to compute ZGV points in a composite plate consisting of 400 layers with a total thickness of $h = \text{50\,mm}$. Such materials are used in the aerospace industry, and this particular example is taken from \cite{huber_classification_2018}. The plate is composed of a symmetric layup of a T800/913 carbon fiber reinforced polymer (CFRP) as depicted in Fig.~\ref{fig:compositeGeometry}. The mass density is given as $\rho = 1550\,\mathrm{kg/m^3}$, and the stiffness in Voigt notation reads
\begin{equation}
    \mathbf{C} =
    \begin{bmatrix}
        154 & 3.7                             & 3.7 & 0    & 0 & 0   \\
            & 9.5                             & 5.2 & 0    & 0 & 0   \\
            &                                 & 9.5 & 0    & 0 & 0   \\
            &                                 &     & 2.15 & 0 & 0   \\
            & \multicolumn{2}{c}{\text{sym.}} &     & 4.2  & 0       \\
            &                                 &     &      &   & 4.2
    \end{bmatrix} \mathrm{GPa} \,. \nonumber
\end{equation}
In order to consistently nondimensionalize the results, we define the smallest shear wave velocity as $c_\mathrm{t} = \sqrt{C_{44}/\rho}$.
\begin{figure}[tbh]
    \centering\footnotesize
    \subfloat[\label{fig:compositeGeometry}]{\raisebox{1cm}{
\def\h{0.5}
\def\l{4}
\tikzset{layerStyle/.style={minimum width={\l cm}, minimum height={\h cm}, inner sep=0pt}, anchor=south}
\tikzset{%
    layer/.pic={%
        \node[layerStyle,pic actions] (lay) {};
        \draw[thin] (lay.north west) -- (lay.north east); 
        \draw[thin] (lay.south west) -- (lay.south east); 
    }
}
%
\begin{tikzpicture}[>=stealth,]
\footnotesize


\pic[fill=black!15, pattern=horizontal lines, pattern color=black] at (0,0)   {layer};
\node[left, fill=white, inner sep=1pt, outer sep=3pt] at (0.5*\l, 0.5*\h) {$0^\circ$};
\pic[fill=black!15, pattern=vertical lines, pattern color=black] at (0,\h)  {layer};
\node[left, fill=white, inner sep=1pt, outer sep=3pt] at (0.5*\l, 1.5*\h) {$90^\circ$};
\pic[fill=black!15, pattern=north east lines, pattern color=black] at (0,2*\h)  {layer};
\node[left, fill=white, inner sep=1pt, outer sep=3pt] at (0.5*\l, 2.5*\h) {$45^\circ$};
\pic[fill=black!15, pattern=north west lines, pattern color=black] at (0,3*\h)  {layer};
\node[left, fill=white, inner sep=1pt, outer sep=3pt] at (0.5*\l, 3.5*\h) {$-45^\circ$};
\pic[fill=black!15, pattern=horizontal lines, pattern color=black] at (0,6*\h)  {layer};

\path (-0.5cm,4.5*\h) -- (-0.5cm,5.5*\h)
  foreach \t in {0, 0.4, ..., 1} {
    pic [pos=\t] {code={\fill circle [radius=1pt];}}
  };
\draw[->] (-0.5cm+1ex,4.3*\h cm) -- node[right]{[0/90/45/-45]$_{\mathrm{50s}}$} (-0.5cm+1ex,5.7*\h cm);

\begin{scope}[shift={(-0.5*\l cm - 2pt, -2pt)}]
    \draw[-latex, fill opacity = 1] (0, 0) -- (1.5em, 0) node[pos=1.3, anchor=north, inner sep=1pt]{$\dirvec{x}$};
    \draw[-latex, fill opacity = 1] (0, 0) -- (0, 1.5em) node[left, inner sep=1pt]{$\dirvec{y}$};
    \draw[fill=white] (0,0) circle (3pt) node[below left, inner sep=2pt]{$\dirvec{z}$};
    \draw[fill=black] (0,0) circle (1pt);
\end{scope}

\end{tikzpicture}}}\hspace{3em}%
    \subfloat[\label{fig:compositeDispersion}]{\includegraphics[height=6cm]{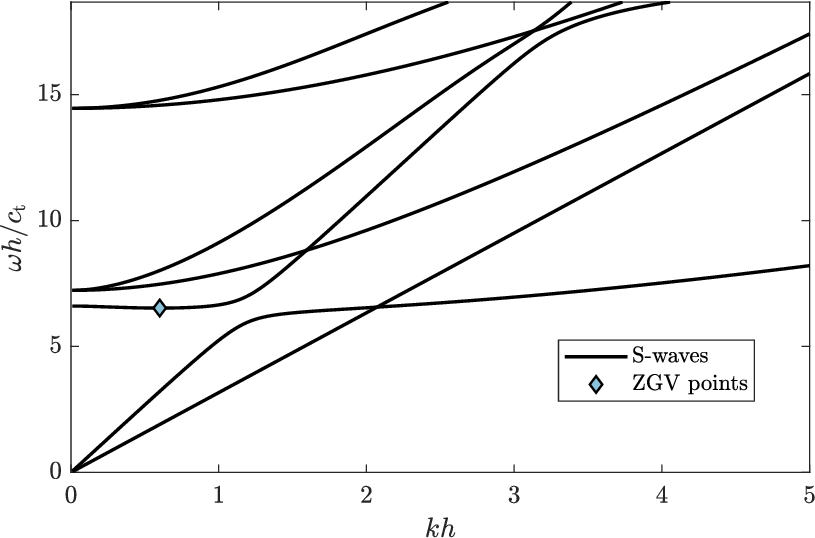}}
    \caption{Composite plate of symmetric layup [0/90/45/-45]$_{\mathrm{50s}}$. (a) Geometry: the fibers in each layer are oriented at angle $\theta$ as indicated; thereby, $\theta$ represents the angular coordinate rotating around $\dirvec{y}$ and measured from $\dirvec{x}$. All 400 layers are of the same thickness and amount to a total of 50\,mm. (b) Dispersion curves for wave vectors $\ten{k} = k \dirvec{x}$, i.e., $\theta = \mathrm{0}^\circ$. One point exists where the axial group velocity component $\partial \omega/\partial k$ vanishes, and it is marked in the plot.}
    \label{fig:composite_plate}
\end{figure}
For the frequency range of interest, it is sufficient to discretize each layer with one linear finite element. Symmetric and anti-symmetric waves decouple, and we consider symmetric waves only. This is achieved by representing one-half of the geometry and fixing the $u_y$ displacement at the center node. Lamb and shear-horizontal polarizations are coupled due to anisotropy. This requires modeling all three displacement components in the equations of motion (\ref{eq:waveguide_problem}). Proceeding as described previously yields the matrices $L_i$ and $M$ from (\ref{eq:waveguide_problem_discrete}) of size $602 \times 602$. Note that the corresponding $\Delta_i$-matrices are of size $724\,808 \times 724\,808$, which is considerable.

The dispersion curves corresponding to propagation along $\dirvec{x}$ are shown in Fig.~\ref{fig:composite_plate}b. There exists a point where $\partial \omega/\partial k = 0$, and it is marked therein. Using the parameters $m = 8$, $k_a h = 0.2$, $k_b h = 2$, $\Delta k h = 0.1$, our algorithm was able to successfully locate it in 43\,s. 
It is important to remark that the group velocity is a vector parallel to the $xz$\nobreakdash-plane. For anisotropic plates, it is not necessarily collinear to the wave vector~$\ten{k}$. The derivative $\partial \omega/\partial k$ represents the group velocity component along the wave vector $\ten{k}$. Since our numerical methods compute points such that $\partial \omega/\partial k = 0$, we find the waves whose group velocity is orthogonal to the wave vector or vanishes altogether. This was exploited in \cite{kiefer_beating_2023} to find waves with a power flux transverse to their wave vector.
As a side note, we also remark that the dispersion curves in Fig.~\ref{fig:composite_plate}b do not exhibit crossings. Instead, they get very close and then veer apart; see \cite{gravenkamp_notes_2023} for details on this phenomenon.

\subsection{Rail}\label{sub:rail}
In this numerical experiment, we compute ZGV points of a relatively complex three-dimensional structure, namely a rail with the cross-section depicted in Fig.~\ref{fig:railGeometry}.
Rails are typical examples of guided wave propagation in three-dimensional structures, and their dynamic properties are often investigated due to the relevance of acoustic emission and ultrasonic material testing, see, e.g., \cite{Gavric1995a, Hayashi2003a, Thompson1993, Zhang2016} and the references therein. This particular geometry has been studied in \cite{Gravenkamp2016}, where dispersion curves have already been computed based on the semi-analytical formulation outlined in Section~\ref{sec:waveguide}. Instead of conventional finite elements, the cross-section is discretized by means of non-uniform rational B-splines (NURBS), which allow for the exact description of this shape without introducing geometry approximation errors. Furthermore, NURBS are very robust at high frequencies. However, for the discussion in this paper, these differences are of lesser significance, as the obtained matrices possess the same relevant properties compared to using high-order polynomials.
For computing the ZGV points, we use the discretization suggested in \cite{Gravenkamp2016} for computing dispersion curves for the first nine modes up to a frequency of 10\,kHz. Specifically, the interpolation relies on the 30 patches shown in Fig.~\ref{fig:railGeometry}, each of them locally refined using NURBS of the third order, resulting in matrices of size $1020\times 1020$. For clarity, the figure only includes the minimal number of control points required to describe the geometry. A simple isotropic linearly elastic material is assumed with a Poisson's ratio of $\nu = 0.2$. To nondimensionalize the results for consistency with the other examples, we define $h = 172\,\mathrm{mm}$ as the height of the rail, i.e., the largest extent in the $y$-direction.
The dispersion curves are displayed in Fig.~\ref{fig:railDispersion}, together with the two ZGV points found within the selected frequency range. Using $m = 6$, $k_a h = 0.1$, $k_b h = 2$, $\Delta k h = 0.2$, Algorithm~1 locates the two ZGV points in 545\,s (9\,min).

\begin{figure}\centering
    \subfloat[\label{fig:railGeometry}]{\includegraphics[height=5.9cm]{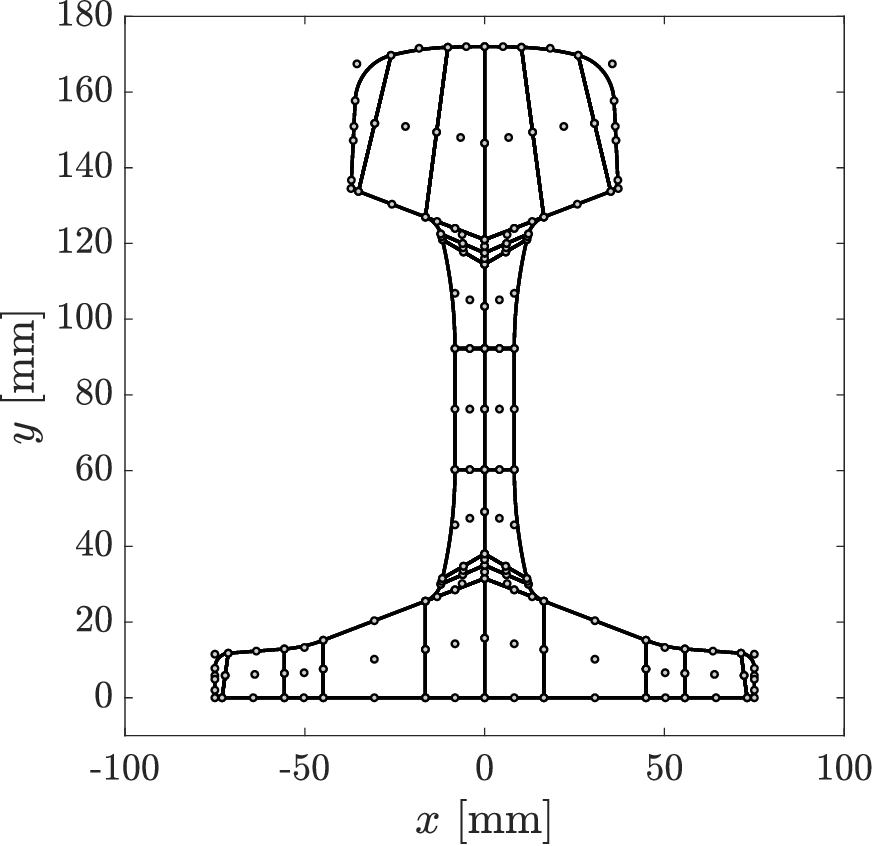}}\hfill
    \subfloat[\label{fig:railDispersion}]{\includegraphics[height=5.9cm]{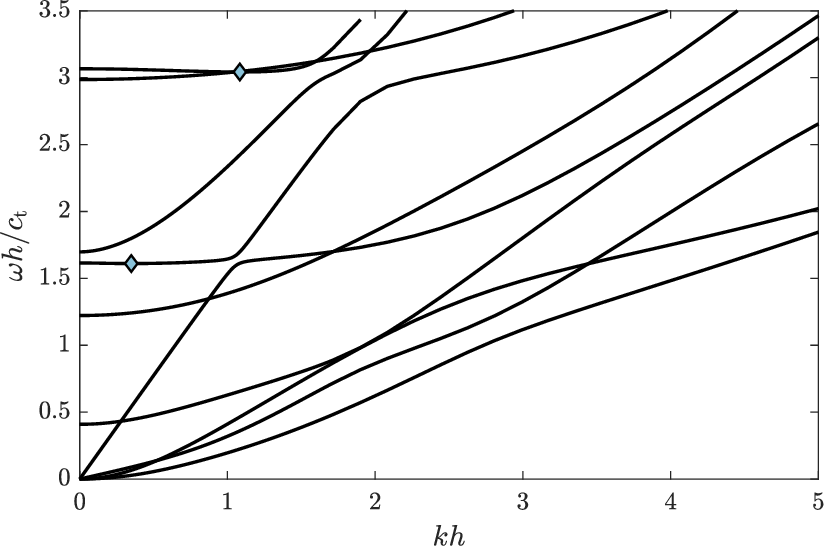}}
    \caption{Wave propagation along a rail. (a): Discretization of the cross-section, showing a division into 30 patches and the control points for describing the contour. (b): Dispersion curves of all propagating modes and the two ZGV points found within the chosen frequency range.\label{fig:rail}}
\end{figure}

\section{Possible generalizations}\label{sec:generalization}

It is possible to define ZGV points for similar two-parameter eigenvalue problems. For example, in \cite{BP_ZGV}, the critical points of dispersion curves for the eigenvalue problem $(A+\lambda B+\mu C)x=0$ are discussed. The obtained numerical methods for this problem are very similar, and we can apply the MFRD and the Gauss-Newton method, suitably modified to the structure of the eigenvalue problem. Specifically, we need to modify the approach in Section~\ref{sec:impr} so that we can use a subspace method in the MFRD without explicitly constructing large $\Delta$ matrices for the related MEP. In a similar way, it would be possible to define a ZGV point and extend the theory and numerical methods for a parameter-dependent polynomial eigenvalue problem of the form
\begin{equation}\label{eq:ZGVd}
    P(k,\omega)u:=\big((\iu k)^d L_d+ (\iu k)^{d-1} L_{d-1} + \cdots + \iu k L_1+L_0+\omega^2 M\big)\,u=0,
\end{equation}
or for a nonlinear parameter-dependent eigenvalue problem $N(k,\omega)u=0$, where $N: \CC^2\to \CC^{n\times n}$.

\section{Conclusion}\label{sec:conclusion}
The improved approach enables us to tackle significantly larger problems and compute more accurate solutions in cases where it was previously either impossible to construct the matrices $\Delta_0,\Delta_1,\Delta_M$ explicitly, or the computation was unfeasibly slow.
For even larger $n$, the improved algorithm also eventually reaches its limits due to the considerable memory requirements. Namely, vectors that span the search subspace in the subspace iteration method for the eigenvalue problem \eqref{eq:threeeig_pert_1} are of size $2n^2$. If the matrices $L_i$ and $M$ are large, the memory required for saving a sufficient number of vectors of this size can become prohibitively large.
A possible solution that could extend the approach to large and sparse matrices $L_i$ and $M$ would be to generalize the subspace iteration methods from \cite{MP_SylvArnoldi}, which exploit the low-rank format of the vectors and work only on vectors of size $n$.
\medskip

\noindent\textbf{Funding}\quad {Funding was provided by the
French Government (Program “Investissements d’Avenir”, Reference No. ANR-10-LABX-24) and Javna agencija za znanstvenoraziskovalno in inovacijsko dejavnost Republike Slovenije (Grant Nos. N1-0154 and P1-0194).}

\section*{Declarations}

\noindent\textbf{Conflict of interests}\quad Not applicable.

\bibliographystyle{abbrv} 
\bibliography{refs_BP, refs_MEP,refs_DK,ZGV_Sylvester_HG_2} 

\begin{thebibliography}{10}

\bibitem{AndrewChuLancaster_Derivatives}
A.~L. Andrew, K.-W.~E. Chu, and P.~Lancaster.
\newblock Derivatives of eigenvalues and eigenvectors of matrix functions.
\newblock {\em SIAM J. Matrix Anal. Appl.}, 14(4):903--926, 1993.

\bibitem{AtkinsonBook}
F.~V. Atkinson.
\newblock {\em Multiparameter {E}igenvalue {P}roblems}.
\newblock Mathematics in Science and Engineering, Vol. 82. Academic Press, New
  York-London, 1972.
\newblock Volume I: Matrices and compact operators.

\bibitem{Bartels_Stewart}
R.~H. Bartels and G.~W. Stewart.
\newblock Solution of the matrix equation ${A}{X} + {X}{B} = {C}$.
\newblock {\em Commun. ACM}, 15(9):820--826, 1972.

\bibitem{cuiBackwardWavesDouble2016}
H.~Cui, W.~Lin, H.~Zhang, X.~Wang, and J.~Trevelyan.
\newblock Backward waves with double zero-group-velocity points in a
  liquid-filled pipe.
\newblock {\em J. Acoust. Soc. Am.}, 139(3):1179--1194, Mar. 2016.

\bibitem{Deuflhard_2011}
P.~Deuflhard.
\newblock {\em Newton {M}ethods for {N}onlinear {P}roblems}, volume~35 of {\em
  Springer Series in Computational Mathematics}.
\newblock Springer, Heidelberg, 2011.
\newblock Affine invariance and adaptive algorithms, First softcover printing
  of the 2006 corrected printing.

\bibitem{Gavric1995a}
L.~Gavri{\'c}.
\newblock Computation of propagative waves in free rail using a finite element
  technique.
\newblock {\em J. Sound Vib.}, 185(3):531--543, Aug. 1995.

\bibitem{Gravenkamp2014}
H.~Gravenkamp, C.~Birk, and C.~Song.
\newblock The computation of dispersion relations for axisymmetric waveguides
  using the {{Scaled Boundary Finite Element Method}}.
\newblock {\em Ultrasonics}, 54:1373--1385, 2014.

\bibitem{Gravenkamp2016}
H.~Gravenkamp, S.~Natarajan, and W.~Dornisch.
\newblock On the use of {{NURBS-based}} discretizations in the scaled boundary
  finite element method for wave propagation problems.
\newblock {\em Comput. Methods Appl. Mech. Eng.}, 315:867--880, 2017.

\bibitem{gravenkamp_notes_2023}
H.~Gravenkamp, B.~Plestenjak, and D.~A. Kiefer.
\newblock Notes on osculations and mode tracing in semi-analytical waveguide
  modeling.
\newblock {\em Ultrasonics}, 135:107112, July 2023.

\bibitem{Gravenkamp2019}
H.~Gravenkamp, A.~Saputra, and S.~Duczek.
\newblock High-order shape functions in the scaled boundary finite element
  method revisited.
\newblock {\em Arch. Comput. Methods Eng.}, 28:473--494, 2021.

\bibitem{Gravenkamp2012}
H.~Gravenkamp, C.~Song, and J.~Prager.
\newblock A numerical approach for the computation of dispersion relations for
  plate structures using the scaled boundary finite element method.
\newblock {\em J. Sound Vib.}, 331:2543--2557, 2012.

\bibitem{GuttelTisseur_NEP}
S.~G\"uttel and F.~Tisseur.
\newblock The nonlinear eigenvalue problem.
\newblock {\em Acta Numer.}, 26:1--94, 2017.

\bibitem{Hayashi2003a}
T.~Hayashi, W.-J. Song, and J.~L. Rose.
\newblock Guided wave dispersion curves for a bar with an arbitrary
  cross-section, a rod and rail example.
\newblock {\em Ultrasonics}, 41(3):175--183, May 2003.

\bibitem{Hladky-Hennion1996}
A.-C. {Hladky-Hennion}.
\newblock Finite element analysis of the propagation of acoustic waves in
  waveguides.
\newblock {\em J. Sound Vib.}, 194(2):119--136, July 1996.

\bibitem{HKP_JD2EP}
M.~E. Hochstenbach, T.~Ko\v{s}ir, and B.~Plestenjak.
\newblock A {J}acobi-{D}avidson type method for the two-parameter eigenvalue
  problem.
\newblock {\em SIAM J. Matrix Anal. Appl.}, 26(2):477--497, 2004/05.

\bibitem{Horn_Johnson_Topics_1991}
R.~A. Horn and C.~R. Johnson.
\newblock {\em Topics in {M}atrix {A}nalysis}.
\newblock Cambridge University Press, Cambridge, 1991.

\bibitem{Huang1984b}
K.~H. Huang and S.~B. Dong.
\newblock Propagating waves and edge vibrations in anisotropic composite
  cylinders.
\newblock {\em J. Sound Vib.}, 96(3):363--379, 1984.

\bibitem{huber_classification_2018}
A.~M.~A. Huber and M.~G.~R. Sause.
\newblock Classification of solutions for guided waves in anisotropic
  composites with large numbers of layers.
\newblock {\em J. Acoust. Soc. Am.}, 144(6):3236--3251, Dec. 2018.

\bibitem{Elias_DoubleEig}
E.~Jarlebring, S.~Kvaal, and W.~Michiels.
\newblock Computing all pairs {$(\lambda,\mu)$} such that {$\lambda$} is a
  double eigenvalue of {$A+\mu B$}.
\newblock {\em SIAM J. Matrix Anal. Appl.}, 32(3):902--927, 2011.

\bibitem{kauselSemianalyticHyperelementLayered1977}
E.~Kausel and J.~M. Ro{\"e}sset.
\newblock Semianalytic {{Hyperelement}} for {{Layered Strata}}.
\newblock {\em J. Eng. Mech.}, 103(4):569--588, Aug. 1977.

\bibitem{kiefer_gewTool}
D.~A. Kiefer.
\newblock {GEW}tool.
\newblock \url{https://github.com/dakiefer/GEWtool}, 2024.
\newblock Computer software. \url{https://doi.org/10.5281/zenodo.10114243}.

\bibitem{kiefer_beating_2023}
D.~A. Kiefer, S.~Mezil, and C.~Prada.
\newblock Beating resonance patterns and extreme power flux skewing in
  anisotropic elastic plates.
\newblock {\em Sci. Adv.}, 9(51):eadk6846, Dec. 2023.

\bibitem{ZGV_JASA_23}
D.~A. Kiefer, B.~Plestenjak, H.~Gravenkamp, and C.~Prada.
\newblock Computing zero-group-velocity points in anisotropic elastic
  waveguides: Globally and locally convergent methods.
\newblock {\em J. Acoust. Soc. Am.}, 153(2):1386--1398, 2023.

\bibitem{KP_SingGep2}
T.~Ko\v{s}ir and B.~Plestenjak.
\newblock On the singular two-parameter eigenvalue problem {II}.
\newblock {\em Linear Algebra Appl.}, 649:433--451, 2022.

\bibitem{Lehoucq_Sorensen_IRA}
R.~B. Lehoucq and D.~C. Sorensen.
\newblock Deflation techniques for an implicitly restarted {A}rnoldi iteration.
\newblock {\em SIAM J. Matrix Anal. Appl.}, 17(4):789--821, 1996.

\bibitem{LuSu_NLAA}
T.~Lu and Y.~Su.
\newblock A {N}ewton-type method for two-dimensional eigenvalue problems.
\newblock {\em Numer. Linear Algebra Appl.}, 29(4):e2430, 2022.

\bibitem{LuSuBai_SIMAX}
T.~Lu, Y.~Su, and Z.~Bai.
\newblock Variational characterization and {R}ayleigh quotient iteration of
  2{D} eigenvalue problem with applications.
\newblock {\em SIAM J. Matrix Anal. Appl.}, 45(3):1455--1486, 2024.

\bibitem{MP_SylvArnoldi}
K.~Meerbergen and B.~Plestenjak.
\newblock A {S}ylvester-{A}rnoldi type method for the generalized eigenvalue
  problem with two-by-two operator determinants.
\newblock {\em Numer. Linear Algebra Appl.}, 22(6):1131--1146, 2015.

\bibitem{MennickenMoller}
R.~Mennicken and M.~M\"oller.
\newblock {\em Non-self-adjoint boundary eigenvalue problems}, volume 192 of
  {\em North-Holland Mathematics Studies}.
\newblock North-Holland Publishing Co., Amsterdam, 2003.

\bibitem{MP_Q2EP}
A.~Muhi\v{c} and B.~Plestenjak.
\newblock On the quadratic two-parameter eigenvalue problem and its
  linearization.
\newblock {\em Linear Algebra Appl.}, 432(10):2529--2542, 2010.

\bibitem{NeumaierSIMAX}
A.~Neumaier.
\newblock Residual inverse iteration for the nonlinear eigenvalue problem.
\newblock {\em SIAM J. Numer. Anal.}, 22(5):914--923, 1985.

\bibitem{Nocedal_Wright_2006}
J.~Nocedal and S.~J. Wright.
\newblock {\em Numerical {O}ptimization}.
\newblock Springer Series in Operations Research and Financial Engineering.
  Springer, New York, second edition, 2006.

\bibitem{multipareig_28}
B.~Plestenjak.
\newblock {MultiParEig 2.8}.
\newblock
  \url{https://www.mathworks.com/matlabcentral/fileexchange/47844-multipareig},
  2024.
\newblock {MATLAB} Central File Exchange.

\bibitem{BP_ZGV}
B.~Plestenjak.
\newblock On properties and numerical computation of critical points of
  eigencurves of bivariate matrix pencils.
\newblock {\em \href{https://arxiv.org/abs/2411.07450}{arXiv:2411.07450}},
  2024.

\bibitem{prada_local_2008}
C.~Prada, D.~Clorennec, and D.~Royer.
\newblock Local vibration of an elastic plate and zero-group velocity {Lamb}
  modes.
\newblock {\em J. Acoust. Soc. Am.}, 124(1):203--212, 2008.

\bibitem{Stewart_Krylov_Schur}
G.~W. Stewart.
\newblock A {K}rylov-{S}chur algorithm for large eigenproblems.
\newblock {\em SIAM J. Matrix Anal. Appl.}, 23(3):601--614, 2001/02.

\bibitem{Thompson1993}
D.~J. Thompson.
\newblock Wheel-rail noise generation, part {{III}}: rail vibration.
\newblock {\em J. Sound Vib.}, 161(3):421--446, 1993.

\bibitem{Zhang2016}
J.~Zhang, H.~Ma, W.~Yan, and Z.~Li.
\newblock Defect detection and location in switch rails by acoustic emission
  and {{Lamb}} wave analysis : {{A}} feasibility study.
\newblock {\em Appl. Acoust.}, 105:67--74, 2016.

\end{thebibliography}

\appendix

\section{Quadratic convergence of the Gauss-Newton method}

\begin{lemma}\label{lem:jacobi_fullrank}
    Let
    $\xi\in\Omega$ be an eigenvalue of algebraic multiplicity two and geometric multiplicity one of
    a nonlinear eigenvalue problem $N(\lambda)u=0$, where $N:\Omega\to\CC^{n\times n}$ is holomorphic on a domain $\Omega\subseteq\CC$.
    Let nonzero vectors $u,z,s,p\in\CC^n$ be, respectively, the right and left eigenvector and the right and left root vector of height two such that
    \begin{subequations}
        \begin{align}
            N(\xi)u & = 0,\label{eq:yBs_tBx:a}\\
            N(\xi)s+N'(\xi)u &=0,\label{eq:yBs_tBx:b}\\
            z^\mup{H}N(\xi) &=0,\label{eq:yBs_tBx:c}\\
            p^\mup{H}N(\xi)+z^\mup{H}N'(\xi) &=0.\label{eq:yBs_tBx:d}
        \end{align}\label{eq:yBs_tBx}%
    \end{subequations}
    Then \[z^\mup{H}N'(\xi) s + p^\mup{H}N'(\xi)u + z^\mup{H}N''(\xi)u\ne 0.\]
\end{lemma}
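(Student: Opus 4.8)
The plan is to reduce the statement to a well-known fact about defective eigenvalues of a holomorphic matrix function, namely that the coefficient appearing in the second-order Puiseux/Taylor expansion of the perturbed eigenvalue is nonzero precisely when the algebraic multiplicity equals two. Concretely, I would consider a small holomorphic perturbation $N(\lambda) + \varepsilon E$ (or, equivalently, study the two eigenvalues of $N$ near $\xi$ under a generic analytic deformation) and track how the eigenvalue splits. Since $\xi$ has algebraic multiplicity two and geometric multiplicity one, the two eigenvalues $\lambda_{1,2}(\varepsilon)$ form a single Puiseux branch $\lambda(\varepsilon) = \xi + c_1\varepsilon^{1/2} + c_2\varepsilon + \cdots$, and the quantity in the lemma is (up to a nonzero normalization depending on the chosen root vectors) exactly the discriminant-type coefficient whose non-vanishing is equivalent to the Jordan block having size exactly two rather than larger.

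The cleaner route, which I would actually write out, avoids perturbation theory and instead argues directly via the expansion of the scalar function $g(\lambda) := z(\lambda)^{\mathup H} N(\lambda) u(\lambda)$ along a suitable pair of local root functions, or — even more elementary — via the resolvent. Let me sketch the resolvent version. Near $\xi$, write the Laurent expansion $N(\lambda)^{-1} = \frac{P_{-2}}{(\lambda-\xi)^2} + \frac{P_{-1}}{\lambda-\xi} + P_0 + \cdots$, which exists because $\xi$ is a pole of order two of the resolvent (order equals the maximal Jordan chain length, which is two here). The residues satisfy the standard relations obtained by multiplying $N(\lambda)N(\lambda)^{-1} = I$ and its derivatives and collecting powers of $(\lambda-\xi)$: in particular $N(\xi)P_{-2}=0$, $N(\xi)P_{-1}+N'(\xi)P_{-2}=0$, and $N(\xi)P_0 + N'(\xi)P_{-1} + \tfrac12 N''(\xi)P_{-2} = I$. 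Moreover $P_{-2}$ has rank one, $P_{-2} = u z^{\mathup H}/\gamma$ for a scalar $\gamma$, and $P_{-1}$ is built from the root vectors $s,p$: one gets $P_{-1} = \bigl(s z^{\mathup H} + u p^{\mathup H}\bigr)/\gamma + \alpha\, u z^{\mathup H}$ for some scalar $\alpha$, using \eqref{eq:yBs_tBx:b} and \eqref{eq:yBs_tBx:d}. Now left-multiply the order-zero identity by $z^{\mathup H}$ and right-multiply by $u$: the term $z^{\mathup H} N(\xi) P_0 u$ vanishes by \eqref{eq:yBs_tBx:c}, and what survives is $z^{\mathup H} N'(\xi) P_{-1} u + \tfrac12 z^{\mathup H} N''(\xi) P_{-2} u = z^{\mathup H} u$. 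Substituting the rank-one forms of $P_{-1}$ and $P_{-2}$, using $z^{\mathup H} N'(\xi) u = 0$ (which follows from \eqref{eq:yBs_tBx:b} or \eqref{eq:yBs_tBx:d} contracted appropriately) to kill the $\alpha$ and part of the cross terms, collapses the left side to $\tfrac{1}{\gamma}\bigl(z^{\mathup H} N'(\xi) s + p^{\mathup H} N'(\xi) u + \tfrac12\, z^{\mathup H} N''(\xi) u\bigr)$ times a combinatorial constant. Hence this expression equals $\gamma\, z^{\mathup H} u$ up to a fixed nonzero factor; and $\gamma \ne 0$ because $P_{-2}\ne 0$ (the pole genuinely has order two), while $z^{\mathup H} u = 0$ cannot rescue it — one checks instead that the relevant combination is forced to be nonzero exactly because the pole order is not three. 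I would finish by noting that if the displayed quantity were zero, the resolvent identity just derived would be inconsistent (the right-hand side $z^{\mathup H}u$ would have to match something that is structurally zero), forcing a contradiction; alternatively, zero of that quantity would push the pole order of $N^{-1}$ to at least three, contradicting algebraic multiplicity two.

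The main obstacle I anticipate is bookkeeping the precise scalars in the rank-one decompositions of $P_{-2}$ and $P_{-1}$ and making sure the normalization of the root vectors $s,p$ (which are only defined up to adding multiples of $u,z$ and up to scaling) does not change the conclusion — indeed the lemma's expression must be invariant under $s \mapsto s + \tau u$, $p \mapsto p + \sigma z$, which I would verify at the outset since $z^{\mathup H}N'(\xi)u = 0$ makes the extra terms $\tau z^{\mathup H}N'(\xi)u$ and $\sigma (N'(\xi)^{\mathup H} z)^{\mathup H} u = \sigma\, \overline{z^{\mathup H}N'(\xi)u}$ vanish. Once that invariance and the relation $z^{\mathup H}N'(\xi)u=0$ are in hand, the resolvent identity at order zero does essentially all the work; the rest is careful but routine algebra. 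A fully self-contained alternative, if the resolvent route feels heavy, is to differentiate $N(\lambda)u(\lambda) = 0$ twice along a local holomorphic eigenvector branch where it exists, or to invoke the Keldysh-type expansion directly — but the resolvent argument has the advantage of packaging the "multiplicity exactly two" hypothesis transparently as "pole of order exactly two."
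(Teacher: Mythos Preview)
Your resolvent computation contains an error that prevents the argument from closing. With $P_{-1} = (sz^{\mup H} + up^{\mup H})/\gamma + \alpha\,uz^{\mup H}$ as you posit, sandwiching the order-zero identity $N(\xi)P_0 + N'(\xi)P_{-1} + \tfrac12 N''(\xi)P_{-2} = I$ between $z^{\mup H}$ and $u$ actually yields
\[
\frac{z^{\mup H}u}{\gamma}\Bigl(z^{\mup H}N'(\xi)s + \tfrac12\, z^{\mup H}N''(\xi)u\Bigr) = z^{\mup H}u,
\]
because the $up^{\mup H}$ block contributes $(z^{\mup H}N'(\xi)u)(p^{\mup H}u)/\gamma$, which vanishes since $z^{\mup H}N'(\xi)u=0$. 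The term $p^{\mup H}N'(\xi)u$ in your ``collapsed'' expression never appears, and the coefficient on $z^{\mup H}N''(\xi)u$ does not match the lemma's. Worse, when $z^{\mup H}u = 0$---which nothing excludes for a nonlinear eigenvalue problem---the identity is vacuous, and your ``one checks instead\ldots'' is exactly the missing argument. To rescue that case via the resolvent you would have to go to higher order or contract against different vectors, at which point you are re-deriving the chain-length criterion from scratch; this is possible but is not the routine bookkeeping you suggest.

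The paper's proof bypasses all of this with a one-line reduction you overlooked. Multiply \eqref{eq:yBs_tBx:b} on the left by $p^{\mup H}$ and \eqref{eq:yBs_tBx:d} on the right by $s$: both give $-p^{\mup H}N(\xi)s$, hence $z^{\mup H}N'(\xi)s = p^{\mup H}N'(\xi)u$. The quantity in the lemma is therefore exactly $2\bigl(z^{\mup H}N'(\xi)s + \tfrac12 z^{\mup H}N''(\xi)u\bigr)$, and the bracket is nonzero by the standard characterization of a maximal Jordan chain of length two (the paper cites Mennicken--M\"oller, Thm.~1.6.5, and G\"uttel--Tisseur, Thm.~2.5). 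That elementary identity is the whole content of the lemma beyond the cited result.
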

\begin{proof} If we multiply \eqref{eq:yBs_tBx:b} by $p^\mup{H}$ from the left
    and \eqref{eq:yBs_tBx:d} by $s$ from the right, we observe the equality $z^\mup{H}N'(\xi)s=p^\mup{H}N'(\xi)u$.
    Since it follows from \cite[Thm.~1.6.5]{MennickenMoller}, see also \cite[Thm.~2.5]{GuttelTisseur_NEP},
    that \[z^\mup{H}N'(\xi) s + z^\mup{H}\frac{N''(\xi)}{2}u\ne 0,\]
    this completes the proof.
\end{proof}

\begin{lemma}\label{lem:root_eigv}
    Let $(k_*,\omega_*)$ be a ZGV point of \eqref{eq:ZGV1}, such that
    the algebraic multiplicity of $k_*$ as an eigenvalue of the QEP \eqref{eq:QEP_Q} is two, and the geometric multiplicity is one.
    Let $u_*$ and $z_*$ be the corresponding right and left eigenvectors, $y_*=\overline z_*$, and let $a$ and $b$ be such vectors
    that $a^\mup{H}u_*=1$ and $b^\mup{H}y_*=1$.
    The Jacobian $J_F(u_*,y_*,\lambda_*,\mu_*)$, given in \eqref{eq:jacobian},
    where $\lambda_*=\iu k_*$ and $\mu_*=\omega_*^2$, has full rank.
\end{lemma}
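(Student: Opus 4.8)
\quad
The plan is to prove directly that $\ker J_F(u_*,y_*,\lambda_*,\mu_*)=\{0\}$; since $J_F$ has $2n+3$ rows and $2n+2$ columns, this is precisely full column rank. Write $Q(\lambda)=\lambda^2L_2+\lambda L_1+L_0+\mu_*M$, so $Q'(\lambda_*)=2\lambda_*L_2+L_1$ and $Q''(\lambda_*)=2L_2$. I would first collect the following facts. (a) $Q(\lambda_*)u_*=0$ and $z_*^\mup{H}Q(\lambda_*)=0$, the latter equivalent to $Q(\lambda_*)^\mup{T}y_*=0$ for $y_*=\overline z_*$; moreover, as the geometric multiplicity is one and $\rank Q(\lambda_*)^\mup{T}=\rank Q(\lambda_*)$, we have $\ker Q(\lambda_*)=\sp\{u_*\}$ and $\ker Q(\lambda_*)^\mup{T}=\sp\{y_*\}$. (b) Because $(k_*,\omega_*)$ is a ZGV point, the necessary condition \eqref{eq:cond_ZGV}, combined with $W'(k_*,\omega_*)=\iu\,Q'(\lambda_*)$, gives $z_*^\mup{H}Q'(\lambda_*)u_*=0$, i.e.\ $y_*^\mup{T}Q'(\lambda_*)u_*=0$. (c) Because $\omega_*$ is a simple eigenvalue of $W(k_*,\cdot)$, we have $z_*^\mup{H}Mu_*\ne0$, i.e.\ $y_*^\mup{T}Mu_*\ne0$. (d) Because the algebraic multiplicity of $\lambda_*$ is two while the geometric multiplicity is one, there are length-two Jordan chains on both sides (cf.\ the footnote in Section~\ref{sec:theory} and \cite[Sec.~2.2]{GuttelTisseur_NEP}): a right root vector $s_*$ with $Q(\lambda_*)s_*+Q'(\lambda_*)u_*=0$, and a left root vector $p_*$ with $p_*^\mup{H}Q(\lambda_*)+z_*^\mup{H}Q'(\lambda_*)=0$; transposing the latter and putting $\widetilde p_*=\overline{p_*}$ gives $Q(\lambda_*)^\mup{T}\widetilde p_*+Q'(\lambda_*)^\mup{T}y_*=0$.

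Next I would take an arbitrary element $(f^\mup{T},g^\mup{T},\alpha,\beta)^\mup{T}$ of $\ker J_F$ with $f,g\in\CC^n$, $\alpha,\beta\in\CC$, and read off the block rows of \eqref{eq:jacobian} (using $2y_*^\mup{T}L_2u_*=y_*^\mup{T}Q''(\lambda_*)u_*$):
\begin{align}
Q(\lambda_*)f+\alpha\,Q'(\lambda_*)u_*+\beta\,Mu_* &=0,\label{eq:pl-1}\\
Q(\lambda_*)^\mup{T}g+\alpha\,Q'(\lambda_*)^\mup{T}y_*+\beta\,M^\mup{T}y_* &=0,\label{eq:pl-2}\\
y_*^\mup{T}Q'(\lambda_*)f+u_*^\mup{T}Q'(\lambda_*)^\mup{T}g+\alpha\,y_*^\mup{T}Q''(\lambda_*)u_* &=0,\label{eq:pl-3}\\
u_*^\mup{H}f &=0,\label{eq:pl-4}\\
y_*^\mup{H}g &=0.\label{eq:pl-5}
\end{align}
Multiplying \eqref{eq:pl-1} on the left by $y_*^\mup{T}$ kills the first term by (a) and the second by (b), leaving $\beta\,y_*^\mup{T}Mu_*=0$; hence $\beta=0$ by (c). With $\beta=0$, \eqref{eq:pl-1} becomes $Q(\lambda_*)f=-\alpha\,Q'(\lambda_*)u_*=\alpha\,Q(\lambda_*)s_*$ by (d), so $f-\alpha s_*\in\ker Q(\lambda_*)$ and $f=\alpha s_*+c\,u_*$ for some $c\in\CC$; similarly \eqref{eq:pl-2} yields $g=\alpha\,\widetilde p_*+d\,y_*$ for some $d\in\CC$.

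Then I would substitute these into \eqref{eq:pl-3}. By (b) the $c\,u_*$-contribution of $y_*^\mup{T}Q'(\lambda_*)f$ vanishes, and, since $u_*^\mup{T}Q'(\lambda_*)^\mup{T}y_*=y_*^\mup{T}Q'(\lambda_*)u_*=0$, so does the $d\,y_*$-contribution of $u_*^\mup{T}Q'(\lambda_*)^\mup{T}g$; what remains is
\[
\alpha\Bigl(y_*^\mup{T}Q'(\lambda_*)s_*+\widetilde p_*^{\,\mup{T}}Q'(\lambda_*)u_*+y_*^\mup{T}Q''(\lambda_*)u_*\Bigr)=0 .
\]
Using $z_*^\mup{H}=y_*^\mup{T}$ and $p_*^\mup{H}=\widetilde p_*^{\,\mup{T}}$, the parenthesized number equals $z_*^\mup{H}Q'(\lambda_*)s_*+p_*^\mup{H}Q'(\lambda_*)u_*+z_*^\mup{H}Q''(\lambda_*)u_*$, which is nonzero by Lemma~\ref{lem:jacobi_fullrank} applied to the holomorphic family $N=Q$ at $\xi=\lambda_*$, with right and left eigenvectors $u_*,z_*$ and right and left root vectors $s_*,p_*$ (note $Q''(\lambda_*)=2L_2$, consistent with the $(3,3)$ entry of \eqref{eq:jacobian}). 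Hence $\alpha=0$, so $f=c\,u_*$ and $g=d\,y_*$, and then \eqref{eq:pl-4}--\eqref{eq:pl-5} force $c\,u_*^\mup{H}u_*=0$ and $d\,y_*^\mup{H}y_*=0$, i.e.\ $c=d=0$ and $f=g=0$. Therefore $\ker J_F=\{0\}$. (If the last two rows are taken as $a^\mup{H},b^\mup{H}$ with $a^\mup{H}u_*=1$, $b^\mup{H}y_*=1$ as in the statement, then $c=a^\mup{H}f=0$, $d=b^\mup{H}g=0$, and nothing else changes.)

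The step I expect to be the only real obstacle is the collapse of \eqref{eq:pl-3} to the scalar identity $\alpha\,\gamma=0$ with $\gamma\ne0$: one must keep the conjugate/transpose dictionary straight and, above all, recognize that it is precisely the ZGV necessary condition $y_*^\mup{T}Q'(\lambda_*)u_*=0$ that removes the $c$- and $d$-contributions, leaving exactly the nondegeneracy functional supplied by Lemma~\ref{lem:jacobi_fullrank}. The remaining inputs — the existence of the length-two Jordan chains (from the algebraic multiplicity exceeding the geometric one) and $z_*^\mup{H}Mu_*\ne0$ (from simplicity of $\omega_*$) — are standard.
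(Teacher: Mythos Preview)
Your proposal is correct and follows essentially the same route as the paper's proof: kill $\beta$ by pairing the first block row with the left eigenvector and using the ZGV condition together with $z_*^\mup{H}Mu_*\ne 0$; then use the height-two Jordan chains and Lemma~\ref{lem:jacobi_fullrank} to force $\alpha=0$; finally use geometric multiplicity one and the normalization rows to get $f=g=0$. The only cosmetic difference is that you write $f=\alpha s_*+c\,u_*$, $g=\alpha\,\widetilde p_*+d\,y_*$ explicitly and substitute into \eqref{eq:pl-3}, whereas the paper phrases the $\alpha$-step as a short contradiction; your version makes slightly more visible why the $c$- and $d$-terms drop out (via $y_*^\mup{T}Q'(\lambda_*)u_*=0$), but the argument is the same.
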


\begin{proof}
    Suppose that the Jacobian $J_F(u_*,y_*,\lambda_*,\mu_*)$ is rank deficient. Then, there exist vectors
    $s,t$ and scalars $\alpha,\beta$, not all being zero, such that
    \[J_F(u_*,y_*,\lambda_*,\mu_*)\left[\begin{matrix}
                s^\mup{T} & t^\mup{T} & \alpha & \beta
            \end{matrix}\right]^\mup{T}=0.
    \]
    Then
     \begin{subequations}
     \begin{align}
        (\lambda_*^2 L_2+\lambda_* L_1+L_0+\mu_* M)s +\alpha (2\lambda_* L_2+L_1)u_* + \beta Mu_*                 & =0,\label{eq:dokaz1} \\
        (\lambda_*^2 L_2+\lambda_* L_1+L_0+\mu_* M)^\mup{T}t+\alpha (2\lambda_* L_2+L_1) y_* + \beta M^\mup{T}y_* & =0,\label{eq:dokaz2} \\
        y_*^\mup{T}(2\lambda_* L_2+L_1)s + u_0^\mup{T}(2\lambda_* L_2+L_1)^\mup{T}t+2\alpha y_*^\mup{T}L_2u_0     & =0,\label{eq:dokaz3} \\
        u_*^\mup{H}s                                                                                                & =0,\label{eq:dokaz4} \\
        y_*^\mup{H}t                                                                                                & =0\label{eq:dokaz5}.
    \end{align}
    \end{subequations}

    First, we show that $\beta=0$. If we multiply \eqref{eq:dokaz1} by $z_*^\mup{H}$, then it follows that $\beta=0$ because $z_*^\mup{H}(2\lambda_* L_2+L_1)u_0=0$ due to a ZGV point and $z_*^\mup{H}Mu_*\ne 0$ because we require that $\omega_*$ is a simple eigenvalue of $W(k_*,\omega)$.

    If $\alpha\ne 0$, then it follows from \eqref{eq:dokaz1} and \eqref{eq:dokaz2} that $(1/\alpha)s$ and $(1/\alpha)\overline t$ are left and right root vectors of height 2 of the QEP $N(\lambda)u:=(\lambda^2 L_2 + \lambda L_1 + L_0 + \mu_* M) u = 0$ for the eigenvalue $\lambda_*$. But then it follows from Lemma~\ref{lem:root_eigv} that \eqref{eq:dokaz3} is not zero. Therefore, $\alpha=0$.

    Since $\alpha= \beta=0$ it follows from \eqref{eq:dokaz1} that $s=\gamma u_*$ for a scalar $\gamma$ and then $s=0$
    because of \eqref{eq:dokaz4}. In a similar way we get from \eqref{eq:dokaz2} and \eqref{eq:dokaz5} that
    $t=0$. This shows that the kernel of $J_F(u_*,y_*,\lambda_*,\mu_*)$ is trivial.
\end{proof}

\end{document}